\documentclass[11pt]{article}

\usepackage[T1]{fontenc}
\usepackage[utf8]{inputenc}
\usepackage{lmodern}
\usepackage{amsthm,amsmath,amsfonts,amssymb,mathtools}
\usepackage{aliascnt}
\usepackage[numbers,sort&compress]{natbib}
\usepackage[margin=1in]{geometry}
\usepackage{enumitem}
\usepackage{xcolor}
\usepackage{microtype}
\usepackage[
  hypertexnames=false,
  colorlinks=true,
  linkcolor=blue,
  citecolor=blue,
  urlcolor=blue
]{hyperref}
\usepackage[nameinlink,capitalize]{cleveref}

\newcommand{\PaperPDFAuthors}{Tianle Liu}
\hypersetup{
  bookmarksdepth=3,
  pdftitle={Stein Kernels and Normal Approximation for Log-Concave Bilinear Forms},
  pdfauthor={\PaperPDFAuthors},
  pdfsubject={Probability; log-concave bilinear forms; Stein kernels},
  pdfkeywords={Jiang-Lee-Vempala conjecture, log-concave bilinear form, spectral participation ratio, effective rank, Stein kernel, Wasserstein distance, moment map}
}
\newcommand{\R}{\mathbb R}
\newcommand{\E}{\mathbb E}
\newcommand{\Law}{\mathcal L}
\newcommand{\Var}{\operatorname{Var}}
\newcommand{\Cov}{\operatorname{Cov}}
\newcommand{\Tr}{\operatorname{Tr}}
\newcommand{\HS}{\mathrm{HS}}
\newcommand{\dd}{\,\mathrm d}
\newcommand{\authorwithcontact}[3]{%
  #1\thanks{#2. Email: \href{mailto:#3}{\nolinkurl{#3}}.}%
}

\newtheoremstyle{thmstyleone}
  {6pt}{6pt}{\itshape}{}{}{.}{0.5em}{}
\newtheoremstyle{thmstyletwo}
  {6pt}{6pt}{\normalfont}{}{}{.}{0.5em}{}

\numberwithin{equation}{section}
\theoremstyle{thmstyleone}
\newtheorem{theorem}{Theorem}[section]
\newaliascnt{proposition}{theorem}
\newtheorem{proposition}[proposition]{Proposition}
\aliascntresetthe{proposition}
\newaliascnt{lemma}{theorem}
\newtheorem{lemma}[lemma]{Lemma}
\aliascntresetthe{lemma}
\newaliascnt{definition}{theorem}

\aliascntresetthe{definition}
\newaliascnt{corollary}{theorem}
\newtheorem{corollary}[corollary]{Corollary}
\aliascntresetthe{corollary}
\newaliascnt{conjecture}{theorem}

\aliascntresetthe{conjecture}
\theoremstyle{thmstyletwo}
\newaliascnt{example}{theorem}

\aliascntresetthe{example}
\newtheorem*{remark}{Remark}

\crefname{theorem}{theorem}{theorems}
\Crefname{theorem}{Theorem}{Theorems}
\crefname{proposition}{proposition}{propositions}
\Crefname{proposition}{Proposition}{Propositions}
\crefname{lemma}{lemma}{lemmas}
\Crefname{lemma}{Lemma}{Lemmas}
\crefname{definition}{definition}{definitions}
\Crefname{definition}{Definition}{Definitions}
\crefname{corollary}{corollary}{corollaries}
\Crefname{corollary}{Corollary}{Corollaries}
\crefname{conjecture}{conjecture}{conjectures}
\Crefname{conjecture}{Conjecture}{Conjectures}
\crefname{example}{example}{examples}
\Crefname{example}{Example}{Examples}
\crefname{section}{section}{sections}
\Crefname{section}{Section}{Sections}
\crefname{equation}{equation}{equations}
\Crefname{equation}{Equation}{Equations}

\title{\bfseries
Stein Kernels and Normal Approximation\\
for Log-Concave Bilinear Forms}
\author{%
  \authorwithcontact
    {\textbf{Tianle Liu}}
    {Department of Statistics and Operations Research, UNC Chapel Hill}
    {tianletl@unc.edu}
}
\date{\today}

\begin{document}
\maketitle

\begin{abstract}
Jiang, Lee, and Vempala conjectured that if \(X,Y\in\mathbb R^n\) are
independent isotropic log-concave random vectors, then
\(W_2(\Law(\langle X,Y\rangle),N(0,n))\) is bounded by a universal
constant.  Subject to Theorems~1.2 and~2.5 of Letwin's July 2026 preprint,
we prove this conjecture and a rectangular bilinear-form extension.  For
independent isotropic log-concave \(X\in\mathbb R^m\),
\(Y\in\mathbb R^n\), and nonzero \(B\in\mathbb R^{m\times n}\), put
\[
  r_4(B)
  =\frac{\Tr(B^{\mathsf T}B)^2}
         {\Tr((B^{\mathsf T}B)^2)}.
\]
We construct a nonnegative scalar Stein kernel for
\(X^{\mathsf T}BY/\|B\|_{\HS}\) whose squared \(L^2\) discrepancy is at
most \(20/r_4(B)\), and consequently obtain the same bound for squared
\(2\)-Wasserstein distance to \(N(0,1)\).  The proof develops an exact
covariance identity and deficit decomposition for trace observables of
moment-map Stein kernels, together with a stability theorem for positive
Stein kernels under log-concave approximation.  Taking \(B=I_n\) yields
\[
  W_2^2\!\left(
    \Law\!\left(\frac{\langle X,Y\rangle}{\sqrt n}\right),N(0,1)
  \right)\leq\frac{20}{n},
\]
which is the Jiang--Lee--Vempala conjecture.

\end{abstract}

\noindent\textbf{2020 Mathematics Subject Classification.}
Primary 60F05; Secondary 60E15, 52A23.

\smallskip
\noindent\textbf{Keywords.}
Jiang--Lee--Vempala conjecture; log-concave bilinear form; spectral
participation ratio; Stein kernel; Wasserstein distance; moment map.

\section{Introduction}

Jiang, Lee, and Vempala conjectured that there is a universal constant \(C\)
such that
\begin{equation}
  W_2\!\left(\Law(\langle X,Y\rangle),N(0,n)\right)\leq C
  \tag{1.1}
\end{equation}
for every pair of independent isotropic log-concave vectors
\(X,Y\in\R^n\) \cite{JLV}.  Subject to the quadratic-form and moment-map
matrix-energy estimates in Theorems~1.2 and~2.5 of Letwin's July 2026
preprint \cite{Letwin}, we prove this conjecture.  We in fact obtain the
following rectangular extension: if \(X\in\R^m\) and \(Y\in\R^n\) are
independent isotropic log-concave vectors and
\(0\neq B\in\R^{m\times n}\), then, with
\[
  r_4(B)
  :=\frac{\|B\|_{\HS}^4}{\|B\|_{S_4}^4}
  =\frac{\Tr(B^{\mathsf T}B)^2}
         {\Tr((B^{\mathsf T}B)^2)},
\]
we have
\begin{equation}
  W_2^2\!\left(
    \Law\!\left(\frac{X^{\mathsf T}BY}{\|B\|_{\HS}}\right),N(0,1)
  \right)
  \leq\frac{20}{r_4(B)}.
  \tag{1.2}\label{eq:intro-bilinear}
\end{equation}
The parameter \(r_4(B)\) is the participation ratio of the squared singular
values of \(B\), also called the R\'enyi-2 spectral effective rank; it lies
between \(1\) and \(\operatorname{rank}(B)\).  Taking \(m=n\) and \(B=I_n\) in
\eqref{eq:intro-bilinear} gives
\(W_2^2(\Law(\langle X,Y\rangle/\sqrt n),N(0,1))\leq20/n\), equivalently
\(W_2^2(\Law(\langle X,Y\rangle),N(0,n))\leq20\), and thus proves the
Jiang--Lee--Vempala conjecture under the cited estimates.

The proof has two structural steps.  Let \(\tau\) be a moment-map Stein
kernel and let \(A\) be deterministic and symmetric.  In the regular
moment-map model, three source-coordinate integrations by parts give
\begin{equation}
  \Var\!\left(\Tr(A\tau)\right)
  =\Var(Y^{\mathsf T}AY)
   -3\E(AY)^{\mathsf T}\tau(AY)
   +\E\Tr(A\tau A\tau).
  \tag{1.3}\label{eq:intro-trace-identity}
\end{equation}
Fathi's weighted Poincar\'e inequality for the same kernel bounds
\(\E(AY)^{\mathsf T}\tau(AY)\) from below, and hence bounds the negative
middle term in~\eqref{eq:intro-trace-identity} from above.  Polarization
reveals an operator identity with an explicit nonnegative
weighted-Poincar\'e deficit.  Letwin's two estimates then give the covariance
operator bound
\begin{equation}
  \Var\!\left(\Tr(A\tau)\right)\leq4\Tr(A^2).
  \tag{1.4}\label{eq:intro-trace-bound}
\end{equation}

For the second step, put \(S=X^{\mathsf T}BY\),
\(A=B^{\mathsf T}B\), and
\(Q=X^{\mathsf T}B\tau(Y)B^{\mathsf T}X\).  The vector Stein identity for
\(Y\) shows that \(\E[Q\mid S]\) is a nonnegative scalar Stein kernel for
\(S\).  The quadratic-form estimate controls the fluctuation of \(Q\)
conditional on \(Y\); \eqref{eq:intro-trace-bound}, in direction \(A\),
controls its conditional mean.  These contributions are at most
\(16\Tr(A^2)\) and \(4\Tr(A^2)\).  Projection onto \(S\), normalization by
\(\Var S=\Tr A\), and the one-dimensional transport bound by Stein
discrepancy yield \eqref{eq:intro-bilinear}.

\subsection{Related literature}

The geometric setting originates in the isoperimetric conjecture of Kannan,
Lov\'asz, and Simonovits \cite{KLS}.  It predicts a dimension-free comparison
between the Cheeger constant of an isotropic log-concave measure and that of
the worst linear half-space cut.  Besides its intrinsic role in asymptotic
convex geometry, KLS controls concentration, spectral gaps, and the efficiency
of algorithms for sampling log-concave measures; see
\cite{LovaszVempala,KlartagLehecSurvey} for background and applications.

The classical central-limit problem for convex bodies asks whether most
one-dimensional marginals of a high-dimensional isotropic convex body are
approximately Gaussian.  Anttila, Ball, and Perissinaki established an early
quantitative form \cite{ABP}, and Klartag proved a general central limit
theorem and subsequent power-law estimates \cite{KlartagCLT,KlartagPower}.
Those results concern typical linear marginals of one high-dimensional law.
The Jiang--Lee--Vempala conjecture instead randomizes the direction using an
independent log-concave vector and asks for a dimension-free \(W_2\) bound for
their inner product \cite{JLV}.  The same work relates power-law forms of this
conjecture to power-law estimates for the KLS constant, but its reverse
implication is not an endpoint, dimension-free equivalence.

Bilinear and quadratic central limit theorems have a separate classical
history.  De Jong proved qualitative central limit theorems for generalized
quadratic and multilinear forms built from independent scalar coordinates
\cite{deJongQuadratic,deJongMultilinear}; quantitative and multidimensional
versions were later developed by D\"obler and Peccati
\cite{DoblerPeccati}.  Invariance principles for homogeneous sums and
Gaussian-chaos fourth-moment phenomena appear in
\cite{NourdinPeccatiReinert,NourdinPeccatiOptimal}.  For rank one and for
equal-weight sums of independent Gaussian products, the natural law is
variance-gamma rather than Gaussian; Stein's method for that target was
developed by Gaunt \cite{GauntVarianceGamma}.  These theories
typically exploit coordinatewise independence, Hoeffding degeneracy, or an
underlying Gaussian chaos.  Here the coordinates within each factor may be
strongly dependent: the moment-map kernel and the cited estimates
accommodate this within-factor dependence, and the approximation rate is
governed by the spectral participation ratio \(r_4(B)\).

The strongest approaches to KLS have proceeded through stochastic
localization.  On the inverse-Cheeger normalization of the KLS constant,
Eldan's reduction, combined with the then-best thin-shell estimate, gave
\(O(n^{1/3}\sqrt{\log n})\) \cite{Eldan,GuedonMilman}; Lee and Vempala
improved this to \(O(n^{1/4})\) \cite{LeeVempala}; Chen's bound \cite{Chen}
was \(\exp\!\bigl(O(\sqrt{\log n\,\log\log n})\bigr)=n^{o(1)}\);
Klartag--Lehec~obtained a polylogarithmic bound
\cite{KlartagLehecKLS}; and Klartag sharpened it to the published
\(O(\sqrt{\log n})\) estimate \cite{KlartagLog}.  Letwin's recent preprint
improves this to \(O(\log^{1/4}n)\) and proves the two estimates used below
\cite{Letwin}.  None of these KLS bounds is assumed as a black box in our
argument.

Thin-shell estimates form a closely related line of work.  Paouris proved
sharp large-deviation estimates for isotropic log-concave measures
\cite{Paouris}, while Gu\'edon and Milman interpolated those estimates with
then-best thin-shell bounds \cite{GuedonMilman}.  Klartag and Lehec recently
proved a dimension-free thin-shell bound by parallel coupling
\cite{KlartagLehecThinShell}.  Chen and Klartag subsequently gave another
account of the sharp result and proved a sharp Hilbert--Schmidt bound for the
full third-moment tensor \cite{ChenKlartag2026}.  That tensor theorem is not
used here; the matrix-energy estimate needed for general symmetric \(A\) is
the one cited from Letwin.

Our analytic framework is the moment-map construction of Cordero-Erausquin
and Klartag \cite{CEK,KlartagMM}.  Fathi observed that the Hessian of the
moment potential supplies a positive-semidefinite Stein kernel and a weighted
Poincar\'e inequality \cite{Fathi}.  Stein kernels under spectral-gap
assumptions and their discrepancy consequences were further developed in
\cite{CourtadeFathiPananjady}.  Finally, the passage from a scalar
\(L^2\)-Stein discrepancy to \(2\)-Wasserstein distance is taken from
Ledoux, Nourdin, and Peccati \cite{LNP}.  Fathi and Mikulincer proved a
different stability theory for invariant measures, moment measures, and
Stein kernels \cite{FathiMikulincer}.  The compactness theorem in
\cref{sec:approx} is tailored instead to the positive kernels and uniform
matrix energies used here.  The contribution of this paper is to combine
these tools through the exact identity \eqref{eq:intro-trace-identity}, its
operator deficit decomposition, stability under log-concave approximation,
and scalarization for rectangular bilinear forms.

\paragraph{Dependency and scope.}
The recent quantitative results used here are Theorems~1.2 and~2.5 of
\cite{Letwin}, restated in \cref{thm:letwin}; the regular model and its
approximation also use \cite[Lemma~2.2 and Lemma~A.1]{Letwin}.  As checked
on 30 August 2026, these statements occur in the first arXiv version of that
preprint and have not undergone peer review.  Our conclusions are therefore
conditional on those statements.
The argument neither assumes nor proves the KLS conjecture: the scalar
observables \(X^{\mathsf T}BY\) are much more special than arbitrary test
functions of a log-concave vector.

\paragraph{Organization.}
\Cref{sec:inputs} records definitions and the exact estimates used.
\Cref{sec:trace} proves the trace identity, its operator deficit
decomposition, and covariance consequences in the regular model.
\Cref{sec:approx} proves an abstract stability theorem and removes the
regularity assumptions.  \Cref{sec:gclt} proves the bilinear-form theorem
and the Jiang--Lee--Vempala conjecture.  \Cref{sec:checks} examines the role
of spectral participation in Gaussian, rank-one, and product-exponential models;
\cref{sec:limitations} concludes with the scope and limitations.

\section{Preliminaries and quantitative estimates}
\label{sec:inputs}

\subsection{Log-concavity, isotropy, and Stein kernels}

A probability measure on \(\R^n\) is log-concave if it has a density of the
form \(e^{-V}\) on a convex support, where \(V\) is convex in the extended
sense.  A random vector \(Y\) is isotropic if
\[
  \E Y=0,
  \qquad
  \E(YY^{\mathsf T})=I_n.
\]

A measurable matrix field \(\tau:\R^n\to\R^{n\times n}\) is a Stein kernel
for a centered law \(\mu\) if
\begin{equation}
  \E\langle Y,\Phi(Y)\rangle
  =\E\langle\tau(Y),\nabla\Phi(Y)\rangle_{\HS}
  \tag{2.1}
\end{equation}
for every smooth compactly supported vector field \(\Phi\).  Here
\(\langle B,C\rangle_{\HS}=\Tr(B^{\mathsf T}C)\).  If \(Y\) is isotropic,
then testing coordinatewise gives \(\E\tau(Y)=I_n\), whenever the relevant
entries are integrable.

For a centered real random variable \(Z\), a scalar Stein kernel is a
measurable function \(\kappa_Z\) satisfying
\begin{equation}
  \E[Z\phi(Z)]=\E[\kappa_Z(Z)\phi'(Z)]
  \tag{2.2}
\end{equation}
for smooth tests \(\phi\).  If \(\Var Z=1\), the squared \(L^2\) Stein
discrepancy associated with this kernel is
\(\E(\kappa_Z(Z)-1)^2\).

\subsection{The regular moment-map model}

We first work with a regular isotropic log-concave target
\begin{equation}
  \dd\mu(y)=e^{-V(y)}\mathbf 1_K(y)\dd y,
  \tag{2.3}
\end{equation}
where \(K\subset\R^n\) is bounded, open, and convex, and \(V\) is smooth and
convex on a neighborhood of \(\overline K\).  The moment-measure theorem
\cite{CEK,KlartagMM} supplies a smooth strictly convex potential \(\psi\),
unique up to translation, for which
\begin{equation}
  \dd\nu(u)=e^{-\psi(u)}\dd u,
  \qquad
  Y=\nabla\psi(U)\sim\mu,
  \qquad U\sim\nu.
  \tag{2.4}
\end{equation}
The normalization in~(2.4) includes
\(\int e^{-\psi}=1\).  Write
\[
  H=\nabla^2\psi(U).
\]
The map \(\nabla\psi:\R^n\to K\) is a diffeomorphism in the regular model,
and the transported Hessian
\begin{equation}
  \tau(y)=\nabla^2\psi\!\left((\nabla\psi)^{-1}(y)\right)
  \tag{2.5}
\end{equation}
is a symmetric positive-semidefinite Stein kernel \cite{Fathi}.  In
particular, \(H=\tau(Y)\) and \(\E H=I_n\).

The same construction yields the weighted Poincar\'e inequality
\begin{equation}
  \Var_\mu g
  \leq
  \E\langle\tau(Y)\nabla g(Y),\nabla g(Y)\rangle,
  \tag{2.6}\label{eq:weighted-poincare}
\end{equation}
initially for smooth \(g\) and then by closure; see \cite{Fathi}.  It is also
the Brascamp--Lieb inequality for \(e^{-\psi}\), transported by
\(\nabla\psi\): for \(h=g\circ\nabla\psi\), one has
\(\nabla h=H\nabla g(Y)\), which gives exactly~(2.6).

\subsection{Quadratic and moment-map estimates}

The following theorem records the complete quantitative content imported
from \cite{Letwin}.

\begin{theorem}[Letwin's quadratic and moment-map estimates]
\label{thm:letwin}
The cited \cite[Theorems~1.2 and~2.5]{Letwin} state the following estimates.
For
every isotropic log-concave \(Z\in\R^n\) and every deterministic symmetric
matrix \(A\),
\begin{equation}
  \Var(Z^{\mathsf T}AZ)\leq8\Tr(A^2).
  \tag{2.7}\label{eq:letwin-qf}
\end{equation}
In the regular moment-map model above, for every deterministic symmetric
matrix \(A\),
\begin{equation}
  \E\Tr(AHAH)\leq2\Tr(A^2).
  \tag{2.8}\label{eq:letwin-energy}
\end{equation}
\end{theorem}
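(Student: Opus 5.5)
This statement is an import rather than a new result, so the plan is to check that the two displayed inequalities are faithful restatements of \cite[Theorems~1.2 and~2.5]{Letwin} in the notation of \cref{sec:inputs}. No new estimate will be proved here. The check has three parts: normalizations, hypotheses, and a sanity test on an explicit example.

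For \eqref{eq:letwin-qf}, I will first confirm that the cited quadratic-form theorem uses the same normalization, namely \(\E Z=0\) and \(\E ZZ^{\mathsf T}=I_n\), and measures \(A\) in Hilbert--Schmidt norm, \(\|A\|_{\HS}^2=\Tr(A^2)\) for symmetric \(A\). Then I will check that the constant \(8\) is stated for the variance itself and not for a centered second moment under a different convention. If Letwin states the result for nonsymmetric \(A\), or for \(A\) of trace zero, the reduction is elementary:
\begin{itemize}[leftmargin=*]
  \item replacing \(A\) by \((A+A^{\mathsf T})/2\) leaves \(Z^{\mathsf T}AZ\) unchanged and does not increase \(\|A\|_{\HS}\);
  \item subtracting \((\Tr A/n)I_n\) changes \(Z^{\mathsf T}AZ\) by a multiple of \(|Z|^2\), which would then be handled by the thin-shell part of the same theorem.
\end{itemize}
I expect the symmetric statement to be verbatim, so this reduction should not be needed.

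For \eqref{eq:letwin-energy}, I will check that the regular model in \cite{Letwin} is the one described in (2.3)--(2.5). There \(K\) is bounded, open and convex, \(V\) is smooth and convex near \(\overline K\), \(\psi\) is the moment potential with \(\int e^{-\psi}=1\), and \(H=\nabla^2\psi(U)\) with \(U\sim e^{-\psi}\). Two invariances make the remaining conventions harmless:
\begin{itemize}[leftmargin=*]
  \item \(\psi\) is unique only up to translation, but translating \(\psi\) translates \(U\) and leaves the law of \(H\) unchanged;
  \item the trace is cyclic, so \(\Tr(AHAH)=\|H^{1/2}AH^{1/2}\|_{\HS}^2\), and either form of the energy may appear in the source.
\end{itemize}

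As a sanity check I will test both constants on the standard Gaussian, which is the degenerate case of the regular model. There \(\psi(u)=|u|^2/2+\text{const}\), so \(H=I_n\), and the two sides compare as follows:
\[
  \Var(Z^{\mathsf T}AZ)=2\Tr(A^2)\leq 8\Tr(A^2),
  \qquad
  \E\Tr(AHAH)=\Tr(A^2)\leq 2\Tr(A^2).
\]
Both imported constants are therefore consistent with the Gaussian case.

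The only genuine obstacle is bookkeeping: making sure the hypotheses and constants of \cite[Theorem~2.5]{Letwin} match the regular model exactly. In particular, I need to confirm that Letwin does not require \(A\) to be trace-free, or to commute with anything. If it did, the later use of \eqref{eq:letwin-energy} with \(A=B^{\mathsf T}B\) would have to be adjusted, for instance by decomposing \(A=A_0+(\Tr A/n)I_n\) and controlling the cross terms using \(\E H=I_n\).
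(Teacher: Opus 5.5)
Your proposal agrees with the paper, which gives no proof of this statement and presents it purely as a restatement of \cite[Theorems~1.2 and~2.5]{Letwin}; your Gaussian check is correct and matches the computations in \cref{sec:checks}. One caution about your trace-free fallback, which is moot here: splitting off \((\Tr A/n)I_n\) would not recover the constants \(8\) and \(2\), since the cross terms \(\Cov(Z^{\mathsf T}A_0Z,|Z|^2)\) and \(\E\Tr(A_0H^2)\) need not vanish (the latter involves the second moment of \(H\), which \(\E H=I_n\) does not control), so Cauchy--Schwarz would give only \(16\) and \(4\) and would change the downstream constant \(20\).
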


The constant \(8\) in~\eqref{eq:letwin-qf} is sharp.  The all-\(A\) form
of~\eqref{eq:letwin-energy}, rather than only \(A=I\), is needed for the
trace covariance theorem and the rectangular bilinear-form extension; the
Jiang--Lee--Vempala corollary itself specializes to \(A=I_n\).  This theorem
records the sole quantitative input
from Letwin's preprint; the qualitative regularity and approximation uses
were identified above.  There is no KLS assumption.  In particular,
the weighted inequality~(2.6) is tied to the non-Euclidean moment-map kernel
and does not assert a dimension-free Euclidean Poincar\'e inequality.

\subsection{Transport by scalar Stein discrepancy}

We use the following standard consequence of Stein discrepancy, proved in
the generality needed here by Ledoux, Nourdin, and Peccati \cite{LNP}.

\begin{theorem}[Stein discrepancy controls quadratic transport]
\label{thm:stein-w2}
Let \(Z\) be centered with variance \(1\), and suppose it admits a scalar
Stein kernel \(\kappa_Z\in L^2\).  If \(G\sim N(0,1)\), then
\begin{equation}
  W_2^2(\Law(Z),\Law(G))
  \leq\E(\kappa_Z(Z)-1)^2.
  \tag{2.9}
\end{equation}
\end{theorem}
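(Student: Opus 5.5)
The statement is the standard Ledoux--Nourdin--Peccati bound, and I would prove it by Otto--Villani-type interpolation along the Ornstein--Uhlenbeck semigroup, or more directly by the one-dimensional Stein-equation route.

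\textbf{Step 1 (density and score).} In dimension one, a centered variable with variance one and an \(L^2\) Stein kernel has an absolutely continuous law. Let \(\phi\) range over smooth compactly supported tests. Taking \(\phi\) close to an indicator function in the kernel identity shows that \(\E[\kappa_Z(Z)\mid Z]\ge 0\), and that the law of \(Z\) has density
\[
  p(z)\propto \E[Z\mathbf 1_{Z>z}]/\kappa(z)
\]
on the interior of its support. We may replace \(\kappa_Z\) by its conditional expectation given \(Z\); by Jensen this does not increase the discrepancy. So assume \(\kappa_Z=\kappa(Z)\).

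\textbf{Step 2 (regularize and interpolate).} Let \(G\) be an independent standard Gaussian. Put \(Z_t=e^{-t}Z+\sqrt{1-e^{-2t}}\,G\), with law \(p_t\,\dd x\) and \(\gamma\) the standard Gaussian density. Convolution with a Gaussian gives \(Z_t\) a Stein kernel,
\[
  \kappa_t=\E\bigl[e^{-2t}\kappa(Z)+(1-e^{-2t})\,\big|\,Z_t\bigr].
\]
The score of \(Z_t\) relative to \(\gamma\) is \(\rho_t=\nabla\log(p_t/\gamma)\), and integration by parts gives
\[
  \rho_t(Z_t)=\frac{e^{-2t}}{\sqrt{1-e^{-2t}}}\,
  \E\bigl[(\kappa(Z)-1)\,G\,\big|\,Z_t\bigr].
\]
This is the key identity: differentiate \(\E[Z_t\phi(Z_t)]\) and use the two Stein identities for \(Z\) and for \(G\). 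Cauchy--Schwarz then yields
\[
  I(p_t\,|\,\gamma)^{1/2}
  \le \frac{e^{-2t}}{\sqrt{1-e^{-2t}}}\,
  \bigl(\E(\kappa(Z)-1)^2\bigr)^{1/2}.
\]

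\textbf{Step 3 (transport along the flow).} The OU flow gives \(\frac{\dd}{\dd t}W_2(\Law Z_t,\gamma)\ge -I(p_t\,|\,\gamma)^{1/2}\); this is the Otto--Villani estimate, valid since the velocity field of the flow is \(-\rho_t\). Moreover \(Z_t\to G\) in \(W_2\) as \(t\to\infty\). Integrating over \(t\in(0,\infty)\) and using
\[
  \int_0^\infty e^{-2t}(1-e^{-2t})^{-1/2}\,\dd t=1
\]
gives \(W_2(\Law Z,\gamma)\le (\E(\kappa_Z-1)^2)^{1/2}\). Squaring gives (2.9).

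\textbf{Main obstacle.} The hard part is justifying Step 3 rigorously: the \(W_2\) derivative bound along the flow at positive times, and continuity at \(t=0\). I would handle this by working at positive times, where \(p_t\) is smooth and positive, and letting \(t\downarrow0\) using \(W_2\)-continuity, since \(\E Z^2<\infty\). Alternatively, since the paper attributes the statement to \cite{LNP}, the proof can simply cite their Proposition~3.1, specialized to dimension one. This is what I would do, with the sketch above as justification.
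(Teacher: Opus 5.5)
Your proposal takes the paper's route. The paper gives no proof of its own and cites Ledoux--Nourdin--Peccati, and your Steps~2--3 correctly reproduce their Ornstein--Uhlenbeck argument: the Gaussian-smoothed kernel \(\kappa_t\), the score formula \(\rho_t(Z_t)=\frac{e^{-2t}}{\sqrt{1-e^{-2t}}}\E[(\kappa(Z)-1)G\mid Z_t]\), and integration against \(\int_0^\infty e^{-2t}(1-e^{-2t})^{-1/2}\dd t=1\). Citing their Proposition~3.1, as you intend, is exactly what the paper does.

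The one false statement is in Step~1, and it is harmless. An \(L^2\) Stein kernel does not force \(\Law(Z)\) to be absolutely continuous. The kernel identity only gives \(\kappa\,\Law(Z)=\E[Z\mathbf 1_{\{Z>z\}}]\dd z\), which confines any singular part of \(\Law(Z)\) to \(\{\kappa=0\}\). For instance, \((1-\varepsilon)N(0,\sigma^2)+\varepsilon\delta_0\) with \(\sigma^2=(1-\varepsilon)^{-1}\) has the bounded kernel \(\kappa=\sigma^2\) off the origin and \(\kappa(0)=0\). Steps~2--3 only use \(Z_t\) for \(t>0\), plus \(W_2\)-continuity as \(t\downarrow0\), so you can simply delete that claim. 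This is the ``no density assumption'' point the paper makes right after the statement.
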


No density assumption on \(Z\) is needed: the general case follows by
Gaussian regularization in \cite{LNP}.

\section{Covariance transfer for moment-map Stein kernels}
\label{sec:trace}

Fix a deterministic symmetric matrix \(A\).  In the regular model define
\begin{equation}
  f_A=\Tr(AH),
  \qquad
  q_A=Y^{\mathsf T}AY,
  \tag{3.1}
\end{equation}
and the deterministic quantities
\begin{equation}
  T_A=\E(AY)^{\mathsf T}H(AY),
  \qquad
  U_A=\E\Tr(AHAH).
  \tag{3.2}
\end{equation}
Although \(A\) need not be positive-semidefinite, both \(T_A\) and \(U_A\)
are nonnegative: this is immediate after inserting \(H^{1/2}\).

\subsection{Three integrations by parts}

The source measure \(\nu=e^{-\psi}\dd u\) satisfies
\begin{equation}
  \E_\nu\operatorname{div}F(U)
  =\E_\nu\langle F(U),\nabla\psi(U)\rangle
  =\E_\nu\langle F(U),Y\rangle
  \tag{3.3}
\end{equation}
for compactly supported smooth vector fields \(F\).

\begin{lemma}[Exact trace identity]
\label{lem:identity}
In the regular moment-map model, suppose that
\(\E\Tr(H^2)<\infty\).  Then
\begin{equation}
  \boxed{
  \Var(f_A)=\Var(q_A)-3T_A+U_A.}
  \tag{3.4}
\end{equation}
\end{lemma}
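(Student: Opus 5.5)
The plan is to compute \(\E f_A^2\) by integrating by parts on the source side with~(3.3), using \(Y=\nabla\psi(U)\), \(H=\nabla^2\psi(U)\), and the observation that
\[
  f_A=\Tr(A\nabla^2\psi)=\operatorname{div}(A\nabla\psi)(U).
\]
Since \(\E H=I_n\) and \(\E YY^{\mathsf T}=I_n\), both means equal \(\Tr A\): \(\E f_A=\E q_A\). The variance identity (3.4) is therefore equivalent to the second-moment identity
\[
  \E f_A^2=\E q_A^2-3T_A+U_A.
\]
I will obtain it from two applications of~(3.3).

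\emph{First integration by parts.} Take \(F=q_A\,A\nabla\psi\). Then \(\operatorname{div}F=q_Af_A+\langle A\nabla\psi,\nabla_u q_A\rangle\), and \(\nabla_u q_A=2HAY\). Also \(\langle F,Y\rangle=q_A^2\). Hence
\[
  \E q_Af_A=\E q_A^2-2T_A .
\]

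\emph{Second and third integrations by parts, combined.} Taking \(F=f_A\,A\nabla\psi\) directly would produce \(\E\langle AY,\nabla_u f_A\rangle\). This term involves third derivatives of \(\psi\), whose integrability is not given. I would therefore apply~(3.3) to the combined field
\[
  F=f_A\,A\nabla\psi-AHA\nabla\psi .
\]
Write \(F_i\) for its components. Differentiating, the third-derivative terms \(\sum A_{ij}\psi_{ijk}(AY)_k\) coming from the two pieces cancel exactly. What remains is
\[
  \operatorname{div}F=f_A^2-\Tr(AHAH).
\]
On the other side, \(\langle F,Y\rangle=f_Aq_A-Y^{\mathsf T}AHAY\). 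Hence
\[
  \E f_A^2-U_A=\E f_Aq_A-T_A .
\]
Combining this with the first identity gives \(\E f_A^2=\E q_A^2-3T_A+U_A\), and subtracting \((\Tr A)^2\) from both \(\E f_A^2\) and \(\E q_A^2\) yields~(3.4).

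\emph{Justification (main obstacle).} Identity~(3.3) is stated only for compactly supported fields, while the fields above live on all of \(\R^n\). I would insert a cutoff \(\chi_R(u)=\chi(u/R)\) and apply~(3.3) to \(\chi_RF\). This produces the additional term \(\E\langle\nabla\chi_R,F\rangle\), and I need it to vanish as \(R\to\infty\), with dominated convergence handling the main terms. The available bounds are as follows.
\begin{itemize}
  \item \(|Y|\) is bounded, because \(K\) is bounded.
  \item \(|f_A|\le C|H|\) and \(|AHAY|\le C|H|\), for a constant \(C\) depending on \(A\) and \(K\).
  \item \(q_A\) is bounded.
\end{itemize}
Consequently the fields \(F\) satisfy \(|F|\le C(1+|H|)\), and all integrands (\(f_A^2\), \(\Tr(AHAH)\), \(f_Aq_A\), \(q_A^2\), and the \(T_A\)-integrand) are dominated by \(C(1+\Tr H^2)\). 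This is integrable by the hypothesis \(\E\Tr(H^2)<\infty\). The boundary term is bounded by \(\|\nabla\chi\|_\infty R^{-1}\,C\,\E(1+|H|)\), which tends to zero.

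The points I expect to need care are twofold. First, the cancellation of third derivatives must hold pointwise for the smooth \(\psi\), so that no third-derivative integrability is ever required. Second, for the first field \(q_AA\nabla\psi\), the gradient \(\nabla_uq_A=2HAY\) is dominated by \(C|H|\). Both hold in the regular model.
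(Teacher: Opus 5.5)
Your proposal is correct and is essentially the paper's proof. The paper applies (3.3) to the same cut-off fields $\chi_R f_AAY$, $\chi_R q_AAY$ and $\chi_R AHAY$, and cancels the term $\E[\chi_R\langle AY,\nabla f_A\rangle]$ at finite $R$; this is exactly your combined field $f_AAY-AHAY$. It then uses the same $O(R^{-1})$ boundary bounds and dominated convergence under $\E\Tr(H^2)<\infty$. The only cosmetic difference is that the paper rederives $\E\Tr H=n$ (hence $\E f_A=\Tr A$) inside the proof by cutoffs and monotone convergence, whereas you quote $\E H=I_n$ from the preliminaries, which is legitimate here.
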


\begin{proof}
We give the source-coordinate calculation before discussing cutoffs.  Since
\(Y=\nabla\psi\) and \(H=\nabla^2\psi\),
\begin{equation}
  \operatorname{div}(AY)=\Tr(AH)=f_A,
  \qquad
  \nabla q_A=2HAY.
  \tag{3.5}
\end{equation}

Apply~(3.3) first to the vector field \(f_AAY\).  This gives
\begin{equation}
  \E f_A^2
  =\E(f_Aq_A)-\E\langle AY,\nabla f_A\rangle.
  \tag{3.6}
\end{equation}
Applying~(3.3) to \(q_AAY\) and using~(3.5) gives
\begin{equation}
  \E(f_Aq_A)=\E q_A^2-2T_A.
  \tag{3.7}
\end{equation}

For the third identity, apply~(3.3) to \(AHA Y\).  Symmetry of the third
derivatives of \(\psi\) implies
\begin{equation}
  \operatorname{div}(AHA Y)
  =\Tr(AHAH)+\langle AY,\nabla f_A\rangle.
  \tag{3.8}
\end{equation}
Indeed, in index notation,
\[
  \partial_i\!\left(A_{ij}H_{jk}A_{k\ell}Y_\ell\right)
  =(AY)_k\,\partial_k f_A+\Tr(AHAH),
\]
where symmetry of the third derivatives is used in the first term.
On the other hand,
symmetry of \(A\) gives
\(\langle AHA Y,Y\rangle=(AY)^{\mathsf T}H(AY)\).  Therefore
\begin{equation}
  \E\langle AY,\nabla f_A\rangle=T_A-U_A.
  \tag{3.9}
\end{equation}
Substitution of~(3.7) and~(3.9) into~(3.6) yields
\begin{equation}
  \E f_A^2=\E q_A^2-3T_A+U_A.
  \tag{3.10}
\end{equation}
The displayed calculation is formal until the cutoff passage below, which
also supplies the centering needed to pass from~(3.10) to~(3.4).

Choose a radial \(\chi\in C_c^\infty(\R^n)\), with \(0\leq\chi\leq1\),
equal to one on the unit ball and nonincreasing along rays, and put
\(\chi_R(u)=\chi(u/R)\), so that
\(\chi_R\uparrow1\) and \(\lvert\nabla\chi_R\rvert\leq C/R\).  Because the
regular target \(K\) is bounded, there is an \(R_K<\infty\) such that
\(\lvert Y\rvert\leq R_K\).  Applying~(3.3) first to \(\chi_RY\) gives
\[
 \E[\chi_R\Tr H]
 =\E[\chi_R|Y|^2]-\E\langle Y,\nabla\chi_R\rangle.
\]
Since \(H\succeq0\), monotone convergence and the boundedness of \(Y\)
show that \(\E\Tr H=n\).  In particular,
\(\E\|H\|_{\mathrm{op}}\leq n\).  Applying the same argument to
\(\chi_RAY\) now gives
\begin{equation}
  \E f_A=\E q_A=\Tr A,
  \tag{3.10a}
\end{equation}
because \(\lvert f_A\rvert\leq\|A\|_{\mathrm{op}}\Tr H\) and the new
boundary term is \(O_A(R^{-1})\).

We next replace the three noncompact vector
fields above by
\[
  \chi_R f_AAY,
  \qquad
  \chi_R q_AAY,
  \qquad
  \chi_R AHA Y,
\]
and put
\[
\begin{aligned}
 t_A&=(AY)^{\mathsf T}H(AY),
 &u_A&=\Tr(AHAH),\\
 B_{1,R}&=\E\bigl[f_A\langle AY,\nabla\chi_R\rangle\bigr],\\
 B_{2,R}&=\E\bigl[q_A\langle AY,\nabla\chi_R\rangle\bigr],\\
 B_{3,R}&=\E\bigl[\langle AHA Y,\nabla\chi_R\rangle\bigr].
\end{aligned}
\]
Applying~(3.3) at finite \(R\), rather than to the uncut fields, gives
\[
\begin{aligned}
 \E[\chi_R f_A^2]
   &=\E[\chi_R f_Aq_A]
     -\E[\chi_R\langle AY,\nabla f_A\rangle]-B_{1,R},\\
 \E[\chi_R f_Aq_A]
   &=\E[\chi_R q_A^2]-2\E[\chi_R t_A]-B_{2,R},\\
 \E[\chi_R\langle AY,\nabla f_A\rangle]
   &=\E[\chi_R t_A]-\E[\chi_R u_A]-B_{3,R}.
\end{aligned}
\]
Combining these three identities first yields
\[
 \E[\chi_R f_A^2]
 =\E[\chi_R q_A^2]-3\E[\chi_R t_A]
   +\E[\chi_R u_A]-B_{1,R}-B_{2,R}+B_{3,R}.
\]
The boundary terms vanish without any boundedness assumption on \(H\).
Indeed,
\[
 \lvert f_A\rvert\leq\|A\|_{\mathrm{op}}\Tr H,
 \qquad
 \lvert AHA Y\rvert
 \leq\|A\|_{\mathrm{op}}^2R_K\Tr H,
\]
while \(q_A\) and \(AY\) are bounded.  Hence each
\(B_{j,R}=O_A(R^{-1})\) using only \(\E\Tr H=n\).
For the bulk terms, positivity of \(H\) and
\(\E\Tr(H^2)<\infty\) give integrable dominators: in particular,
\[
 f_A^2\leq n\|A\|_{\mathrm{op}}^2\Tr(H^2),
 \qquad
 u_A\leq\|A\|_{\mathrm{op}}^2\Tr(H^2),
 \qquad
 t_A\leq\|A\|_{\mathrm{op}}^2R_K^2\Tr H.
\]
Dominated convergence therefore proves~(3.10), and~(3.10a) gives the
centered identity~(3.4).  All appearances of \(\nabla H\) have cancelled in
the finite-\(R\) combination before \(R\to\infty\).  Thus the argument uses
neither global boundedness of the Hessian nor an unproved third-derivative
integrability estimate.  Under the assumptions of
\cref{thm:trace}, the required second moment follows immediately
from~\eqref{eq:letwin-energy} with \(A=I_n\).
\end{proof}

\subsection{Covariance-operator form and trace bound}

Let \(\mathsf S_n\) be the Hilbert space of real symmetric \(n\times n\)
matrices with Hilbert--Schmidt inner product
\[
  \langle A,B\rangle_{\HS}=\Tr(AB).
\]
For \(A,B\in\mathsf S_n\), define the symmetric bilinear forms
\[
\begin{aligned}
  \Gamma_{\rm tr}(A,B)
    &=\Cov(f_A,f_B),\\
  \Gamma_{\rm quad}(A,B)
    &=\Cov(q_A,q_B),\\
  \mathsf T(A,B)
    &=\E(AY)^{\mathsf T}H(BY),\\
  \mathsf U(A,B)
    &=\E\Tr(AHBH).
\end{aligned}
\]
Thus \(\mathsf T(A,A)=T_A\) and \(\mathsf U(A,A)=U_A\).
The symmetry of \(\mathsf T\) follows from the symmetry of \(H\), while the
symmetry of \(\mathsf U\) follows by transposition and cyclicity of the
trace.  We use the same notation for these forms and their Riesz
representatives on \(\mathsf S_n\).  In particular,
\(I_{\mathsf S_n}\) denotes the identity operator, whose associated
bilinear form is
\[
  I_{\mathsf S_n}(A,B)=\Tr(AB).
\]
For symmetric bilinear forms, \(\mathsf A\preceq\mathsf B\) means that
\((\mathsf B-\mathsf A)(A,A)\geq0\) for every \(A\in\mathsf S_n\).

\begin{proposition}[Polarized identity and weighted-Poincar\'e deficit]
\label{prop:trace-operator}
In the regular moment-map model,
\begin{equation}
  \boxed{
  \Gamma_{\rm tr}(A,B)
  =
  \Gamma_{\rm quad}(A,B)
  -3\mathsf T(A,B)
  +\mathsf U(A,B)}
  \tag{3.11}\label{eq:polarized-trace}
\end{equation}
for every \(A,B\in\mathsf S_n\).  Moreover, the bilinear form
\begin{equation}
  \mathsf D
  :=
  4\mathsf T-\Gamma_{\rm quad}
  \tag{3.12}\label{eq:wp-deficit}
\end{equation}
is positive-semidefinite, and
\begin{equation}
  \boxed{
  \Gamma_{\rm tr}
  =
  \frac14\Gamma_{\rm quad}
  +\mathsf U
  -\frac34\mathsf D.}
  \tag{3.13}\label{eq:operator-deficit}
\end{equation}
\end{proposition}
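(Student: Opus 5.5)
The proof has three parts: polarize \cref{lem:identity}, apply the weighted Poincar\'e inequality~\eqref{eq:weighted-poincare} to \(q_A\), and rearrange.

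\emph{Polarization.} Each side of~\eqref{eq:polarized-trace} is a symmetric bilinear form in \((A,B)\):
\begin{itemize}[leftmargin=*]
\item \(\Gamma_{\rm tr}\) and \(\Gamma_{\rm quad}\) are covariances of \(f_A\) and \(q_A\), which are linear in \(A\);
\item \(\mathsf T\) and \(\mathsf U\) were noted to be symmetric just before the statement.
\end{itemize}
\cref{lem:identity} gives the diagonal identity \(\Gamma_{\rm tr}(A,A)=\Gamma_{\rm quad}(A,A)-3\mathsf T(A,A)+\mathsf U(A,A)\) for every symmetric \(A\). We apply it to \(A+B\), \(A\) and \(B\), all of which lie in \(\mathsf S_n\), and use
\[
  \beta(A,B)=\tfrac12\bigl(\beta(A+B,A+B)-\beta(A,A)-\beta(B,B)\bigr).
\]
This gives~\eqref{eq:polarized-trace}.

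The lemma needs \(\E\Tr(H^2)<\infty\). We obtain it as the lemma's proof indicates, from~\eqref{eq:letwin-energy} with \(A=I_n\). If we do not want to invoke Letwin here, we can instead state the proposition under that moment hypothesis, which holds throughout the regular model under the paper's standing inputs. All covariances are finite because \(q_A\) is bounded, \(K\) being bounded, and \(f_A\in L^2\) by the moment bound.

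\emph{Positivity of \(\mathsf D\).} It suffices to check the diagonal. Take \(g(y)=y^{\mathsf T}Ay\) in~\eqref{eq:weighted-poincare}. Then \(\nabla g(y)=2Ay\), so
\[
  \Var(q_A)\le \E\langle \tau(Y)\,2AY,\,2AY\rangle=4\,\E(AY)^{\mathsf T}H(AY)=4T_A.
\]
Here we use \(\tau(Y)=H\). The function \(g\) is smooth and bounded on \(\overline K\) together with its gradient, so~\eqref{eq:weighted-poincare} applies directly; if needed we multiply by a cutoff equal to one near \(\overline K\), which changes nothing on the support. Hence \(\mathsf D(A,A)=4T_A-\Gamma_{\rm quad}(A,A)\ge0\) for all \(A\), which is positive semidefiniteness in the stated \(\preceq\) sense.

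\emph{The decomposition~\eqref{eq:operator-deficit}.} This is algebra from~\eqref{eq:polarized-trace}. Substituting \(\mathsf T=\tfrac14(\mathsf D+\Gamma_{\rm quad})\) gives
\[
  \Gamma_{\rm tr}=\Gamma_{\rm quad}-\tfrac34\Gamma_{\rm quad}-\tfrac34\mathsf D+\mathsf U=\tfrac14\Gamma_{\rm quad}+\mathsf U-\tfrac34\mathsf D.
\]

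The only delicate point is the analytic legitimacy of the two inputs, namely the finiteness hypothesis of \cref{lem:identity} and the applicability of~\eqref{eq:weighted-poincare} to the quadratic test function. Both are handled by boundedness of \(K\) and the \(A=I_n\) energy bound, so I expect no real obstacle.
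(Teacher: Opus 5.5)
Your proposal is correct and follows the paper's proof essentially verbatim: you polarize the diagonal identity of \cref{lem:identity} using $A+B$, $A$, $B$, apply the weighted Poincar\'e inequality~\eqref{eq:weighted-poincare} to $q_A$ to get $\Var(q_A)\le 4T_A$, and substitute $\mathsf T=\frac14(\mathsf D+\Gamma_{\rm quad})$. Your explicit handling of the hypothesis $\E\Tr(H^2)<\infty$, via \eqref{eq:letwin-energy} with $A=I_n$, matches the paper's own remark at the end of the lemma's proof, so nothing is missing.
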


\begin{proof}
Apply \cref{lem:identity} with \(A+B\), subtract its instances with \(A\)
and \(B\), and divide by two.  This gives
\eqref{eq:polarized-trace}.  No further integration by parts or cutoff
passage is required.

The weighted Poincar\'e inequality~\eqref{eq:weighted-poincare}, applied to
\(q_A(y)=y^{\mathsf T}Ay\), gives
\[
  \Gamma_{\rm quad}(A,A)
  =\Var(q_A)
  \leq4\E(AY)^{\mathsf T}H(AY)
  =4\mathsf T(A,A).
\]
Consequently \(\mathsf D\succeq0\).  Solving
\eqref{eq:wp-deficit} for \(\mathsf T\) and substituting into
\eqref{eq:polarized-trace} proves \eqref{eq:operator-deficit}.
\end{proof}

The representation \eqref{eq:operator-deficit} separates the two
quantitative inputs from the nonnegative slack in the weighted Poincar\'e
inequality.  The following statement records this transfer with arbitrary
constants.

\begin{theorem}[Trace bound with modular constants]
\label{thm:trace}
Suppose that, for some \(C_q,C_{\rm en}\geq0\), one has
\[
  \Var(Y^{\mathsf T}AY)
  \leq C_q\Tr(A^2),
  \qquad
  \E\Tr(AHAH)
  \leq C_{\rm en}\Tr(A^2)
\]
for every \(A\in\mathsf S_n\).  Then
\begin{equation}
  0\preceq\Gamma_{\rm tr}
  \preceq
  \left(\frac{C_q}{4}+C_{\rm en}\right)I_{\mathsf S_n}.
  \tag{3.14}\label{eq:modular-trace-bound}
\end{equation}
More precisely, the full slack has the positive decomposition
\begin{equation}
\begin{aligned}
  \left(\frac{C_q}{4}+C_{\rm en}\right)I_{\mathsf S_n}
  -\Gamma_{\rm tr}
  ={}&
  \frac14\left(C_qI_{\mathsf S_n}-\Gamma_{\rm quad}\right)\\
  &+\left(C_{\rm en}I_{\mathsf S_n}-\mathsf U\right)
  +\frac34\mathsf D.
\end{aligned}
  \tag{3.15}\label{eq:modular-slack}
\end{equation}

Under the estimates in \cref{thm:letwin}, \(C_q=8\) and
\(C_{\rm en}=2\).  Hence
\begin{equation}
\begin{aligned}
  4I_{\mathsf S_n}-\Gamma_{\rm tr}
  ={}&
  \frac14\left(8I_{\mathsf S_n}-\Gamma_{\rm quad}\right)
  +\left(2I_{\mathsf S_n}-\mathsf U\right)
  +\frac34\mathsf D
  \succeq0,
\end{aligned}
  \tag{3.16}\label{eq:letwin-trace-slack}
\end{equation}
and, for every deterministic symmetric \(A\),
\begin{equation}
  \boxed{
  \Var\!\left(\Tr(AH)\right)
  \leq4\Tr(A^2).}
  \tag{3.17}\label{eq:trace-bound}
\end{equation}
In particular,
\begin{equation}
  \Var(\Tr H)\leq4n.
  \tag{3.18}\label{eq:scalar-trace-bound}
\end{equation}
\end{theorem}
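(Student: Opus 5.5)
The plan is to read everything off the operator identity \eqref{eq:operator-deficit} of \cref{prop:trace-operator}. First I check that \cref{lem:identity}, and hence \cref{prop:trace-operator}, applies. The lemma needs \(\E\Tr(H^2)<\infty\). This follows from the energy hypothesis with \(A=I_n\):
\[
\E\Tr(H^2)\leq C_{\rm en}\,n<\infty .
\]
With that in hand, \eqref{eq:operator-deficit} holds as an identity of symmetric bilinear forms on \(\mathsf S_n\).

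To get \eqref{eq:modular-slack}, I subtract \eqref{eq:operator-deficit} from \((C_q/4+C_{\rm en})I_{\mathsf S_n}\) and regroup:
\[
\Bigl(\tfrac{C_q}{4}+C_{\rm en}\Bigr)I-\Gamma_{\rm tr}
=\tfrac14\bigl(C_qI-\Gamma_{\rm quad}\bigr)+\bigl(C_{\rm en}I-\mathsf U\bigr)+\tfrac34\mathsf D .
\]
This is pure algebra. Each summand is then positive semidefinite:
\begin{itemize}
\item the first, because the quadratic-form hypothesis says exactly \(\Gamma_{\rm quad}(A,A)=\Var(q_A)\leq C_q\Tr(A^2)\);
\item the second, because the energy hypothesis says \(\mathsf U(A,A)=U_A\leq C_{\rm en}\Tr(A^2)\);
\item the third, because \(\mathsf D\succeq0\) by \cref{prop:trace-operator}, which rests on the weighted Poincar\'e inequality \eqref{eq:weighted-poincare}.
\end{itemize}
Since the order \(\preceq\) is defined through diagonal values \(A\mapsto(\cdot)(A,A)\), and these add, the sum is positive semidefinite. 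This gives the upper bound in \eqref{eq:modular-trace-bound}. The lower bound \(0\preceq\Gamma_{\rm tr}\) is immediate, since \(\Gamma_{\rm tr}(A,A)=\Var(f_A)\geq0\).

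For the Letwin specialization, \cref{thm:letwin} supplies both hypotheses with \(C_q=8\) and \(C_{\rm en}=2\). The quadratic-form estimate \eqref{eq:letwin-qf} applies because \(Y\sim\mu\) is isotropic log-concave, and the energy estimate \eqref{eq:letwin-energy} is exactly the \(\mathsf U\) bound. Then \(C_q/4+C_{\rm en}=4\), which gives \eqref{eq:letwin-trace-slack}. Evaluating on the diagonal at \(A\) yields \eqref{eq:trace-bound}. Taking \(A=I_n\), so that \(\Tr(A^2)=n\), gives \eqref{eq:scalar-trace-bound}.

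I expect no real obstacle, since the analytic work was done in \cref{lem:identity}. The one point to verify is bookkeeping: \cref{lem:identity} is stated under the extra hypothesis \(\E\Tr(H^2)<\infty\), and that hypothesis must be derived from the assumed energy bound before \eqref{eq:operator-deficit} is invoked, not taken for granted.
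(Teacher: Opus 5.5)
Your proposal is correct and follows the paper's proof essentially verbatim. You rearrange \eqref{eq:operator-deficit} into \eqref{eq:modular-slack}, observe that the three summands are positive semidefinite by the two hypotheses and by \(\mathsf D\succeq0\), then specialize to \(C_q=8\), \(C_{\rm en}=2\) and evaluate at \(A\) and at \(A=I_n\). Your derivation of \(\E\Tr(H^2)\leq C_{\rm en}n<\infty\) from the modular energy hypothesis itself is a correct and slightly cleaner version of the paper's remark at the end of the proof of \cref{lem:identity}, which cites \eqref{eq:letwin-energy} instead.
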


\begin{proof}
The two assumed estimates are precisely
\[
  \Gamma_{\rm quad}\preceq C_qI_{\mathsf S_n},
  \qquad
  \mathsf U\preceq C_{\rm en}I_{\mathsf S_n}.
\]
Since \(\mathsf D\succeq0\), \eqref{eq:operator-deficit} gives the upper
bound in \eqref{eq:modular-trace-bound}; the lower bound holds because
\(\Gamma_{\rm tr}\) is a covariance operator.  Rearranging
\eqref{eq:operator-deficit} gives the exact decomposition
\eqref{eq:modular-slack}.  Finally, substitute the constants from
\eqref{eq:letwin-qf} and \eqref{eq:letwin-energy}.  Evaluating the resulting
operator inequality in the direction \(A\) proves
\eqref{eq:trace-bound}, and \(A=I_n\) gives
\eqref{eq:scalar-trace-bound}.
\end{proof}

Equation~\eqref{eq:modular-slack} also records the algebraic equality
conditions.  Equality in \eqref{eq:modular-trace-bound} in a fixed
direction \(A\) requires equality in the quadratic-form estimate, the
matrix-energy estimate, and the weighted Poincar\'e inequality in that
direction.  We do not infer from this a geometric classification of
extremizing measures.

\begin{corollary}[Finite-family covariance domination]
\label{cor:covariance}
Assume the estimates in \cref{thm:letwin}.  Let
\(A_1,\ldots,A_k\in\mathsf S_n\), and define
\[
  K_{ij}
  =
  \Cov\!\left(\Tr(A_iH),\Tr(A_jH)\right),
  \qquad
  G_{ij}
  =
  \Tr(A_iA_j).
\]
Then
\begin{equation}
  0\preceq K\preceq4G.
  \tag{3.19}\label{eq:finite-covariance}
\end{equation}
In particular, if \(A_1,\ldots,A_k\) are Hilbert--Schmidt orthonormal,
then \(K\preceq4I_k\).  For two arbitrary directions this also gives
\[
  \left|
  \Cov\!\left(\Tr(AH),\Tr(BH)\right)
  \right|
  \leq
  4\sqrt{\Tr(A^2)\Tr(B^2)}.
\]
\end{corollary}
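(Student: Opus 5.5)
The plan is to derive the corollary directly from the operator inequality \eqref{eq:letwin-trace-slack} of \cref{thm:trace}, by restricting the quadratic forms on \(\mathsf S_n\) to the span of the given family. Fix \(c=(c_1,\ldots,c_k)\in\R^k\) and put \(A_c=\sum_i c_iA_i\in\mathsf S_n\). Since \(A\mapsto\Tr(AH)\) is linear, \(\Tr(A_cH)=\sum_i c_i\Tr(A_iH)\). Bilinearity of covariance then gives
\[
  c^{\mathsf T}Kc=\Var\!\left(\Tr(A_cH)\right)=\Gamma_{\rm tr}(A_c,A_c),
  \qquad
  c^{\mathsf T}Gc=\Tr(A_c^2)=I_{\mathsf S_n}(A_c,A_c).
\]
Nonnegativity of a variance yields \(K\succeq0\). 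Applying \eqref{eq:trace-bound} (equivalently \eqref{eq:letwin-trace-slack} in direction \(A_c\)) yields \(c^{\mathsf T}Kc\leq4c^{\mathsf T}Gc\). Since \(c\) is arbitrary, this is \eqref{eq:finite-covariance}. The only thing to check is that every \(\Tr(A_iH)\) lies in \(L^2\), so that \(K\) is well defined. This follows from \(\E\Tr(H^2)\leq2n\), which is \eqref{eq:letwin-energy} with \(A=I_n\), together with \(\lvert\Tr(A_iH)\rvert^2\leq\Tr(A_i^2)\Tr(H^2)\) by Cauchy--Schwarz.

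For the orthonormal case, orthonormality means \(G=I_k\), and the claim \(K\preceq4I_k\) follows at once.

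For two arbitrary directions, take \(k=2\), \(A_1=A\), \(A_2=B\). The form \((A,B)\mapsto\Cov(\Tr(AH),\Tr(BH))\) is positive semidefinite, so Cauchy--Schwarz gives \(\lvert K_{12}\rvert\leq\sqrt{K_{11}K_{22}}\). Each diagonal entry satisfies \(K_{ii}\leq4G_{ii}\), so
\[
  \lvert K_{12}\rvert\leq\sqrt{4\Tr(A^2)\cdot4\Tr(B^2)}=4\sqrt{\Tr(A^2)\Tr(B^2)}.
\]
An alternative route is to bound \(\Var(\Tr((sA\pm s^{-1}B)H))\) and optimize over \(s\). That route also needs the case \(\Tr(A^2)=0\) handled separately; in that case \(A=0\) and the bound is trivial.

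There is no genuine obstacle in this argument. The one point needing care is that the corollary is stated for the regular moment-map model, with \(H=\nabla^2\psi(U)\), where \cref{thm:trace} applies. All the content sits in \cref{thm:trace}, and the corollary is only its restriction to a finite-dimensional subspace via the Gram matrix.
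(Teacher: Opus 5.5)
Your proposal is correct and follows essentially the same route as the paper. You apply \eqref{eq:trace-bound} to \(A_c=\sum_i c_iA_i\) to get \(c^{\mathsf T}Kc\leq4c^{\mathsf T}Gc\), use that \(K\) is a covariance matrix for the lower bound, and obtain the two-direction estimate from Cauchy--Schwarz for the centered trace observables together with the diagonal bounds. Your check that each \(\Tr(A_iH)\in L^2\) via \(\E\Tr(H^2)\leq2n\) is a valid detail that the paper leaves implicit.
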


\begin{proof}
For \(c=(c_1,\ldots,c_k)\in\R^k\), apply
\eqref{eq:trace-bound} to \(A=\sum_{i=1}^kc_iA_i\).  This gives
\[
  c^{\mathsf T}Kc
  \leq
  4\Tr\!\left(\left(\sum_{i=1}^kc_iA_i\right)^2\right)
  =
  4c^{\mathsf T}Gc.
\]
The lower bound follows because \(K\) is a covariance matrix.  The final
display follows from Cauchy--Schwarz for the centered trace observables and
\eqref{eq:trace-bound}.
\end{proof}

\begin{corollary}[Global spectral budget]
\label{cor:spectral-budget}
Under the same assumptions, the covariance operator satisfies
\begin{equation}
  \Tr_{\mathsf S_n}\Gamma_{\rm tr}
  =
  \E\|H-I_n\|_{\HS}^2
  =
  \E\Tr(H^2)-n
  \leq n.
  \tag{3.20}\label{eq:spectral-budget}
\end{equation}
Consequently, every Hilbert--Schmidt orthonormal family
\(A_1,\ldots,A_k\in\mathsf S_n\) satisfies
\begin{equation}
  \sum_{i=1}^k
  \Var\!\left(\Tr(A_iH)\right)
  \leq\min\{4k,n\},
  \qquad
  \|\Gamma_{\rm tr}\|_{\HS(\mathsf S_n)}^2\leq4n.
  \tag{3.21}\label{eq:spectral-consequences}
\end{equation}
\end{corollary}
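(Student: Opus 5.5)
I will compute the trace of the covariance operator in a Hilbert--Schmidt orthonormal basis \(\{E_\alpha\}\) of \(\mathsf S_n\). By definition,
\[
  \Tr_{\mathsf S_n}\Gamma_{\rm tr}=\sum_\alpha\Var(\Tr(E_\alpha H)).
\]
Since \(H\) is symmetric, \(H-\E H=H-I_n\) lies in \(\mathsf S_n\) almost surely. The quantity \(\Tr(E_\alpha(H-I_n))\) is the coordinate of \(H-I_n\) along \(E_\alpha\). Parseval, exchanged with the expectation by Tonelli since all terms are nonnegative, then gives
\[
  \sum_\alpha\E\bigl(\Tr(E_\alpha(H-I_n))\bigr)^2=\E\|H-I_n\|_{\HS}^2 .
\]
Expanding the right side and using \(\E H=I_n\) (recorded in the proof of \cref{lem:identity} as \(\E\Tr H=n\), and entrywise via (3.10a)) yields \(\E\Tr(H^2)-2\Tr\E H+n=\E\Tr(H^2)-n\). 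The upper bound is then \eqref{eq:letwin-energy} with \(A=I_n\), namely \(\E\Tr(H^2)\le2n\), so the total is at most \(n\). This also confirms the integrability of \(\Tr(H^2)\) needed to make every variance finite.

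For \eqref{eq:spectral-consequences}, take an HS-orthonormal family \(A_1,\dots,A_k\). Each term satisfies \(\Var(\Tr(A_iH))\le4\) by \eqref{eq:trace-bound}, so the sum is at most \(4k\). For the bound by \(n\), I will extend the family to an orthonormal basis of \(\mathsf S_n\). Since \(\Gamma_{\rm tr}\succeq0\), the diagonal entries in that basis are nonnegative, so the partial sum is at most the full trace, which is \(n\).

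For the Hilbert--Schmidt norm, I will diagonalize \(\Gamma_{\rm tr}\) with eigenvalues \(\lambda_j\). \Cref{thm:trace} gives \(0\le\lambda_j\le4\), and the first part gives \(\sum_j\lambda_j\le n\). Hence
\[
  \|\Gamma_{\rm tr}\|_{\HS}^2=\sum_j\lambda_j^2\le4\sum_j\lambda_j\le4n .
\]

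The only delicate point is justifying the interchange of sum and expectation and the finiteness of \(\E\Tr(H^2)\). Both follow from nonnegativity and the \(A=I_n\) case of \eqref{eq:letwin-energy}, so I do not expect a real obstacle.
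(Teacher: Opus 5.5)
Your proposal is correct and follows essentially the same route as the paper. Parseval in a Hilbert--Schmidt orthonormal basis of \(\mathsf S_n\), together with \(\E H=I_n\), gives the trace identity; the case \(A=I_n\) of \eqref{eq:letwin-energy} gives the bound \(n\); and the eigenvalue bounds \(0\le\lambda_j\le4\) from \cref{thm:trace} give both parts of \eqref{eq:spectral-consequences}, with your extension to a full basis using \(\Gamma_{\rm tr}\succeq0\) simply spelling out what the paper compresses into ``combining this trace budget with \(\|\Gamma_{\rm tr}\|_{\rm op}\le4\).''
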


\begin{proof}
Let \((E_\alpha)_\alpha\) be a Hilbert--Schmidt orthonormal basis of
\(\mathsf S_n\).  Since \(\E H=I_n\),
\[
\begin{aligned}
  \Tr_{\mathsf S_n}\Gamma_{\rm tr}
  &=
  \sum_\alpha
  \E\langle E_\alpha,H-I_n\rangle_{\HS}^2\\
  &=
  \E\|H-I_n\|_{\HS}^2
  =
  \E\Tr(H^2)-n.
\end{aligned}
\]
The case \(A=I_n\) of \eqref{eq:letwin-energy} bounds the last expression
by \(n\).  The first inequality in \eqref{eq:spectral-consequences} follows
by combining this trace budget with
\(\|\Gamma_{\rm tr}\|_{\rm op}\leq4\).  If
\((\lambda_\alpha)_\alpha\) are the eigenvalues of
\(\Gamma_{\rm tr}\), then \(0\leq\lambda_\alpha\leq4\), and therefore
\[
  \|\Gamma_{\rm tr}\|_{\HS(\mathsf S_n)}^2
  =
  \sum_\alpha\lambda_\alpha^2
  \leq4\sum_\alpha\lambda_\alpha
  \leq4n.
\]
\end{proof}

\begin{remark}[Deficit bookkeeping for product exponentials]
Let \(Y_i=E_i-1\), where the \(E_i\) are independent
\(\operatorname{Exp}(1)\) variables.  The positive Stein kernel is
\(H=\operatorname{diag}(E_1,\ldots,E_n)\).  Direct calculation gives
\[
  \Gamma_{\rm tr}(A,B)=\sum_{i=1}^n A_{ii}B_{ii}.
\]
Thus \(\Gamma_{\rm tr}\) is the orthogonal projection of
\(\mathsf S_n\) onto its diagonal subspace, and
\(\Tr_{\mathsf S_n}\Gamma_{\rm tr}=n\).  Hence the global budget
\eqref{eq:spectral-budget} is sharp, although the directional constant
in \eqref{eq:trace-bound} is not.

In the direction \(A=I_n\),
\[
\begin{aligned}
  \Gamma_{\rm quad}(I_n,I_n)&=8n,
  &\mathsf T(I_n,I_n)&=3n,\\
  \mathsf U(I_n,I_n)&=2n,
  &\mathsf D(I_n,I_n)&=4n,
  &\Gamma_{\rm tr}(I_n,I_n)&=n.
\end{aligned}
\]
The quadratic and matrix-energy deficits in
\eqref{eq:letwin-trace-slack} therefore vanish in this direction, while
the weighted-Poincar\'e contribution is
\(\frac34\mathsf D(I_n,I_n)=3n\).  It accounts exactly for the gap between
the universal upper bound \(4n\) and the actual trace variance \(n\).
\end{remark}

The exact mixed identity in \cref{prop:trace-operator} is asserted for the
regular moment-map Hessian.  In the next section we pass its covariance and
energy consequences to a limiting positive Stein kernel; we do not claim
that every mixed term in \eqref{eq:polarized-trace} survives for an
arbitrary nonsmooth kernel.

\section{Stability of positive Stein kernels}
\label{sec:approx}

The exact identity of \cref{lem:identity} uses the smooth moment-map
coordinates.  Its consequences, however, are stable under weak
approximation.  We first record a moment estimate that is useful for this
passage.  Write \(\mathsf S_n\) for the space of real symmetric
\(n\times n\) matrices.

\begin{lemma}[Fourth moments from a positive Stein kernel]
\label{lem:stein-fourth}
Let \(Y\) be centered with finite second moments, and suppose that \(Y\)
admits a symmetric positive-semidefinite Stein kernel
\(\tau\in L^2\).  Then
\begin{equation}
  \E|Y|^4
  \leq 9n\,\E\|\tau(Y)\|_{\HS}^2.
  \tag{4.1}\label{eq:stein-fourth}
\end{equation}
\end{lemma}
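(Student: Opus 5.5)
The plan is to test the Stein identity (2.1) against the vector field \(\Phi(y)=|y|^2y\), suitably truncated. Then \(\E|Y|^4=\E\langle\tau(Y),\nabla\Phi(Y)\rangle_{\HS}\). Since \(\nabla\Phi(y)=|y|^2I_n+2yy^{\mathsf T}\), the formal identity reads
\[
  \E|Y|^4=\E\bigl[|Y|^2\Tr\tau(Y)\bigr]+2\E\bigl[Y^{\mathsf T}\tau(Y)Y\bigr].
\]
Positivity of \(\tau\) enters here. For \(\tau\succeq0\) we have \(Y^{\mathsf T}\tau Y\leq\|\tau\|_{\mathrm{op}}|Y|^2\leq\Tr(\tau)|Y|^2\), so the right side is at most \(3\E[|Y|^2\Tr\tau(Y)]\). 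Cauchy--Schwarz then gives \(\E[|Y|^2\Tr\tau]\leq(\E|Y|^4)^{1/2}(\E(\Tr\tau)^2)^{1/2}\), and \((\Tr\tau)^2\leq n\|\tau\|_{\HS}^2\). Dividing by \((\E|Y|^4)^{1/2}\) and squaring yields \(\E|Y|^4\leq 9n\,\E\|\tau\|_{\HS}^2\), which is exactly (4.1).

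The main obstacle is justifying the test function, since \(\Phi\) is neither compactly supported nor known to give an integrable left side. Dividing by \((\E|Y|^4)^{1/2}\) also presupposes finiteness. To handle both, I would use a bounded truncation that preserves the sign structure. Set \(\Phi_R(y)=\phi_R(|y|^2)\,y\) with \(\phi_R(s)=\min(s,R)\), smoothed, and then multiply by a cutoff \(\chi(y/L)\) with \(L\to\infty\). For fixed \(R\), \(\Phi_R\) has linear growth and bounded gradient. The compact cutoff can therefore be removed using only finite second moments and \(\tau\in L^1\): the boundary term is \(\E[\phi_R|Y|\,|\tau|\,|\nabla\chi_L|]\lesssim R\,\E\|\tau\|/L\cdot\sup|y|/L\)-type bounds, which vanish via dominated convergence since \(|y||\nabla\chi_L(y)|\) stays bounded.

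With \(\Phi_R\) admissible, the computation becomes
\[
  \nabla\Phi_R=\phi_R(|y|^2)I+2\phi_R'(|y|^2)yy^{\mathsf T},
\]
where \(0\leq\phi_R'\leq1\) and \(s\phi_R'(s)\leq\phi_R(s)\). This gives
\[
  \E[\phi_R(|Y|^2)|Y|^2]\leq3\,\E[\phi_R(|Y|^2)\Tr\tau]\leq3\bigl(\E\phi_R(|Y|^2)^2\bigr)^{1/2}\bigl(n\E\|\tau\|_{\HS}^2\bigr)^{1/2}.
\]
Because \(\phi_R(s)^2\leq\phi_R(s)s\), the left side dominates \(\E\phi_R^2\), which is finite. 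Hence \(\E\phi_R(|Y|^2)^2\leq9n\E\|\tau\|_{\HS}^2\) uniformly in \(R\). Letting \(R\to\infty\) by monotone convergence gives (4.1), and finiteness comes out automatically rather than being assumed.
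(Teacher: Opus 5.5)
Your proof is correct, but it takes a different route from the paper's.

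\textbf{The paper's route.} It works coordinatewise. It tests the Stein identity with \(e_i h_R(y_i)\chi_L(y)\), where \(h_R(t)=3\int_0^t\min\{s^2,R^2\}\dd s\). This gives the one-dimensional estimate \(\E\min\{Y_i^4,R^4\}\le 9\,\E\tau_{ii}^2\). The factor \(n\) is paid only at the end, through \(|Y|^4\le n\sum_i Y_i^4\).

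\textbf{Your route.} You use the radial field \(\phi_R(|y|^2)\,y\) and bound \(|Y|^4\) directly. Positivity controls the \(yy^{\mathsf T}\) term via \(Y^{\mathsf T}\tau Y\le \Tr(\tau)\,|Y|^2\). The factor \(n\) enters through \((\Tr\tau)^2\le n\|\tau\|_{\HS}^2\).

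\textbf{What each buys.} The paper's argument involves only the diagonal entries of \(\tau\) and yields a sharper per-coordinate statement. Its Cauchy--Schwarz step works equally well with \(|\tau_{ii}|\), so positivity is not really essential there. Yours is basis-free. If you bound \(|Y^{\mathsf T}\tau Y|\le\|\tau\|_{\mathrm{op}}|Y|^2\le\|\tau\|_{\HS}|Y|^2\) instead of using the trace, the same computation gives the constant \((\sqrt n+2)^2\le 9n\), again without positivity.

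\textbf{Shared truncation logic.} In both arguments, a test function of controlled growth makes finiteness of the fourth moment a conclusion rather than an assumption. In both, the compact cutoff is removed using only \(\E|Y|^2<\infty\) and \(\tau\in L^1\).

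\textbf{One detail to make explicit.} Specify the smoothing of \(\min(s,R)\). Take it concave with \(\phi_R(0)=0\), for instance \(\phi_R(s)=R(1-e^{-s/R})\). This guarantees \(0\le\phi_R'\le1\), \(0\le\phi_R(s)\le s\), \(s\phi_R'(s)\le\phi_R(s)\), and \(\phi_R(s)\uparrow s\) as \(R\to\infty\). These are exactly the properties your final steps use.
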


\begin{proof}
Fix a coordinate \(i\) and \(R>0\), and define the odd \(C^1\) function
\[
  h_R(t)
  =3\int_0^t\min\{s^2,R^2\}\,\dd s.
\]
Thus
\[
  h_R'(t)=3\min\{t^2,R^2\},
  \qquad
  t h_R(t)\geq\min\{t^4,R^4\},
  \qquad
  |h_R(t)|\leq3R^2|t|.
\]
To justify this test, choose \(\chi\in C_c^\infty(\R^n)\) equal to one on
the unit ball and zero outside the ball of radius two, put
\(\chi_L(x)=\chi(x/L)\), and apply the Stein identity to smooth
approximations of
\(\Phi_{L,R}(x)=e_i h_R(x_i)\chi_L(x)\).  Removing the smooth
approximation gives
\begin{align*}
  \E\!\left[Y_i h_R(Y_i)\chi_L(Y)\right]
  &=
  \E\!\left[\tau_{ii}(Y)h_R'(Y_i)\chi_L(Y)\right]\\
  &\quad+
  \E\!\left[
    h_R(Y_i)\sum_{j=1}^n\tau_{ij}(Y)\partial_j\chi_L(Y)
  \right].
\end{align*}
The first two terms are dominated by integrable multiples of \(Y_i^2\)
and \(\tau_{ii}\), respectively.  On the support of \(\nabla\chi_L\),
the absolute value of the last integrand is bounded by
\[
  C R^2\|\tau(Y)\|_{\HS}\mathbf 1_{\{|Y|\geq L\}},
\]
which tends to zero in \(L^1\).  Hence
\[
  \E\!\left[Y_i h_R(Y_i)\right]
  =
  3\E\!\left[\tau_{ii}(Y)\min\{Y_i^2,R^2\}\right].
\]
If \(M_{i,R}=\E\min\{Y_i^4,R^4\}\), positivity of \(\tau\) and
Cauchy--Schwarz give
\[
  M_{i,R}
  \leq3\bigl(\E\tau_{ii}^2\bigr)^{1/2}M_{i,R}^{1/2},
  \qquad
  M_{i,R}\leq9\E\tau_{ii}^2.
\]
Letting \(R\to\infty\) and summing over coordinates yields
\[
  \E|Y|^4
  \leq n\sum_{i=1}^n\E Y_i^4
  \leq9n\sum_{i=1}^n\E\tau_{ii}^2
  \leq9n\,\E\|\tau\|_{\HS}^2.
\]
\end{proof}

\begin{proposition}[Weak stability of positive \(L^2\) Stein kernels]
\label{prop:stein-stability}
Let \(\mu_k\Rightarrow\mu\) weakly on \(\R^n\), where each \(\mu_k\)
is centered and has finite second moments.  Let \(Y_k\sim\mu_k\), and
suppose that \(\mu_k\) admits a symmetric positive-semidefinite Stein
kernel \(\tau_k\) such that
\begin{equation}
  L:=\sup_k\E\|\tau_k(Y_k)\|_{\HS}^2<\infty.
  \tag{4.2}\label{eq:uniform-kernel-energy}
\end{equation}
Then \(\mu\) is centered, \(\mu_k\to\mu\) in \(W_2\), and \(\mu\)
admits a symmetric positive-semidefinite Stein kernel \(\tau\) satisfying
\begin{equation}
  \E\tau(Y)=\Cov(Y),
  \qquad
  \E\|\tau(Y)\|_{\HS}^2
  \leq
  \liminf_{k\to\infty}\E\|\tau_k(Y_k)\|_{\HS}^2,
  \tag{4.3}\label{eq:stable-basic}
\end{equation}
where \(Y\sim\mu\).  The same kernel may be chosen with the following
additional permanence properties.

\begin{enumerate}[label=\textup{(\roman*)}]
\item
If \(F:\mathsf S_n\to[0,\infty]\) is convex and lower semicontinuous and
\(\sup_k\E F(\tau_k(Y_k))\leq C_F\), then
\[
  \E F(\tau(Y))\leq C_F.
\]

\item
If, for every \(A\in\mathsf S_n\),
\[
  \Var\!\left(\Tr(A\tau_k(Y_k))\right)
  \leq C_{\rm tr}\Tr(A^2),
\]
then
\begin{equation}
  \Var\!\left(\Tr(A\tau(Y))\right)
  \leq C_{\rm tr}\Tr(A^2)
  \qquad(A\in\mathsf S_n).
  \tag{4.4}\label{eq:stable-trace}
\end{equation}

\item
If, for every \(A\in\mathsf S_n\),
\[
  \E\Tr(A\tau_k(Y_k)A\tau_k(Y_k))
  \leq C_{\rm en}\Tr(A^2),
\]
then
\begin{equation}
  \E\Tr(A\tau(Y)A\tau(Y))
  \leq C_{\rm en}\Tr(A^2)
  \qquad(A\in\mathsf S_n).
  \tag{4.5}\label{eq:stable-energy}
\end{equation}

\item
If a constant \(C_{\rm wp}\) satisfies
\[
  \Var_{\mu_k}g
  \leq
  C_{\rm wp}\,
  \E\langle
    \tau_k(Y_k)\nabla g(Y_k),\nabla g(Y_k)
  \rangle
\]
for every \(k\) and every \(g\in C_c^\infty(\R^n)\), then
\begin{equation}
  \Var_\mu g
  \leq
  C_{\rm wp}\,
  \E\langle
    \tau(Y)\nabla g(Y),\nabla g(Y)
  \rangle
  \qquad(g\in C_c^\infty(\R^n)).
  \tag{4.6}\label{eq:stable-weighted-poincare}
\end{equation}
\end{enumerate}
\end{proposition}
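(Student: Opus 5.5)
The plan is to work with the joint law of \((Y_k,\tau_k(Y_k))\) on \(\R^n\times\mathsf S_n\) and extract a limit coupling, then define \(\tau\) as a conditional expectation.

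\textbf{Step 1: tightness and \(W_2\) convergence.} By \cref{lem:stein-fourth}, \(\sup_k\E|Y_k|^4\leq 9nL\). Hence \(|Y_k|^2\) is uniformly integrable, and weak convergence upgrades to \(W_2\) convergence. In particular \(\E Y=\lim\E Y_k=0\) and \(\Cov(Y)=\lim\Cov(Y_k)\). The pairs \(\pi_k=\Law(Y_k,\tau_k(Y_k))\) are tight, since both marginals have bounded second moments. Pass to a subsequence along which \(\pi_k\Rightarrow\pi\), with \(\pi\) the law of \((Y,M)\). Then \(M\succeq0\) a.s., because the cone is closed. Uniform integrability of \(\|\tau_k\|_{\HS}\) (its second moments are bounded by \(L\)) gives \(\E\|M\|_{\HS}^2\leq\liminf\E\|\tau_k\|_{\HS}^2\) by lower semicontinuity. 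Taking the liminf subsequence first makes this the \(\liminf\) in \eqref{eq:stable-basic}.

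\textbf{Step 2: the Stein identity.} For \(\Phi\in C_c^\infty\), the maps \((y,m)\mapsto\langle y,\Phi(y)\rangle\) and \(\langle m,\nabla\Phi(y)\rangle_{\HS}\) are continuous with growth at most linear in \(m\). By uniform integrability, both sides of the \(\mu_k\) Stein identity converge, giving \(\E\langle Y,\Phi(Y)\rangle=\E\langle M,\nabla\Phi(Y)\rangle_{\HS}\). Set \(\tau(y)=\E[M\mid Y=y]\). Since \(\nabla\Phi(Y)\) is \(Y\)-measurable, \(\tau\) is a Stein kernel. It is symmetric and PSD, because conditional expectation preserves the closed convex cone. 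Conditional Jensen gives \(\E\|\tau\|_{\HS}^2\leq\E\|M\|^2_{\HS}\). For \(\E\tau=\Cov(Y)\), pass to the limit in \(\E\tau_k=\Cov(Y_k)\), using uniform integrability for both sides. Equivalently, test the identity with truncated linear fields, using \(\E|Y|^2<\infty\).

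\textbf{Step 3: the permanence properties.} All four follow the same pattern: pass the inequality to \(M\) by weak convergence plus lower semicontinuity, then to \(\tau\) by conditional Jensen.

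\begin{enumerate}[label=\textup{(\roman*)}]
\item By Portmanteau for the nonnegative lsc \(F\), \(\E F(M)\leq\liminf\E F(\tau_k)\leq C_F\). Conditional Jensen for convex \(F\) then gives \(\E F(\tau)\leq\E F(M)\).

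\item The variance inequality is equivalent to \(\E\Tr(A\tau_k)^2\leq C_{\rm tr}\Tr(A^2)+(\Tr A\,\Cov(Y_k))^2\) with the exact mean. Pass to the limit using lsc of the square and the convergence of means from Step 1. Conditioning then reduces variance: \(\Var\Tr(A\tau)\leq\Var\Tr(AM)\), since both have the same mean.

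\item Apply (i) to the convex function \(F(S)=\Tr(ASAS)=\|A^{1/2}\ldots\|\). More precisely, \(S\mapsto\Tr(ASAS)=\|S^{1/2}AS^{1/2}\|_{\HS}^2\) is the quadratic form \(\langle S,\mathcal A S\rangle\) with \(\mathcal A S=ASA\), which is PSD on \(\mathsf S_n\), hence convex and continuous.

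\item For fixed \(g\in C_c^\infty\), \(\Var_{\mu_k}g\to\Var_\mu g\) since \(g\) is bounded and continuous. The right side \(\E\langle M\nabla g(Y),\nabla g(Y)\rangle\) is linear in \(m\), with integrand continuous and dominated by \(C\|m\|\), so it converges by uniform integrability. Because it is linear in \(M\), replacing \(M\) by \(\tau\) is an equality after conditioning.
\end{enumerate}

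\textbf{Main obstacle.} Items (i)–(iv) must hold simultaneously for \emph{one} \(\tau\), and the liminf in \eqref{eq:stable-basic} must hold along a subsequence. Because each property is verified for the fixed limit \(M\) and the fixed \(\tau=\E[M\mid Y]\), a single subsequence serves all of them. Countably many \(A\) are not needed: each hypothesis holds for every \(k\), so it passes to any subsequence. The delicate points are the convergence of means in (ii) and the uniform integrability justifications. I expect those, together with verifying that the conditional expectation inherits symmetry and positivity measurably, to be the only technical care required.
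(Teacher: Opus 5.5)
Your Steps 1 and 2 and parts (i), (ii), (iv) match the paper's argument: a liminf subsequence, a joint weak limit $(Y,M)$, the barycentric kernel $\tau=\E[M\mid Y]$, and Portmanteau combined with conditional Jensen, variance contraction, or linearity. The gap is in part (iii). You claim that $S\mapsto ASA$ is positive semidefinite on $\mathsf S_n$, so that $S\mapsto\Tr(ASAS)$ is convex. This is false whenever $A$ is indefinite. The eigenvalues of $S\mapsto ASA$ on $\mathsf S_n$ are the products $\lambda_i\lambda_j$ of eigenvalues of $A$. For example, with $A=\operatorname{diag}(1,-1)$ and $S=\begin{pmatrix}0&1\\1&0\end{pmatrix}$ one gets $\Tr(ASAS)=-2$. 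The identity $\Tr(ASAS)=\|S^{1/2}AS^{1/2}\|_{\HS}^2$ gives only nonnegativity on the positive-semidefinite cone, not convexity there. With $T_\pm=\begin{pmatrix}1&\pm c\\ \pm c&1\end{pmatrix}$, the function equals $2-2c^2$ at $T_\pm$ and $2$ at their midpoint $I_2$. So the conditional-Jensen step $\E\Tr(A\tau A\tau)\leq\E\Tr(AMAM)$ can actually fail, not merely lack justification. If $M$ equals $T_+$ or $T_-$ with probability $1/2$, independently of $Y$, then $\tau=I_2$ and the inequality reverses. Your Portmanteau step from $\tau_k$ to $M$ is fine, since the function is nonnegative on the cone. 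It is the passage from $M$ to $\tau$ that breaks.

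The missing idea is to pass only positive-semidefinite directions through the conditional expectation. For $B\succeq0$, the map $T\mapsto\Tr(BTBT)=\|B^{1/2}TB^{1/2}\|_{\HS}^2$ is a nonnegative convex quadratic on all of $\mathsf S_n$, so your argument works in those directions. For general $A=A_+-A_-$, put $|A|=A_++A_-$. For symmetric $T$ one has $\Tr(|A|T|A|T)-\Tr(ATAT)=4\Tr(A_+TA_-T)=4\|A_-^{1/2}TA_+^{1/2}\|_{\HS}^2\geq0$. The hypothesis of (iii) holds in the direction $|A|$, and $\Tr(|A|^2)=\Tr(A^2)$. Hence $\E\Tr(A\tau A\tau)\leq\E\Tr(|A|\tau|A|\tau)\leq C_{\rm en}\Tr(A^2)$. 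This absolute-value reduction is exactly how the paper proves (iii).
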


\begin{proof}
Cutoff approximations to the linear vector fields \(x\mapsto e_i x_j\)
give
\begin{equation}
  \E\tau_k(Y_k)=\E(Y_kY_k^{\mathsf T})=\Cov(Y_k).
  \tag{4.7}\label{eq:kernel-mean-covariance}
\end{equation}
By \cref{lem:stein-fourth},
\[
  \sup_k\E|Y_k|^4\leq9nL.
\]
Thus \((|Y_k|^2)_k\) is uniformly integrable.  Weak convergence implies
convergence of the first and second moments, and therefore
\begin{equation}
  W_2(\mu_k,\mu)\longrightarrow0.
  \tag{4.8}\label{eq:w2-from-kernels}
\end{equation}

Choose a subsequence along which the liminf in \eqref{eq:stable-basic} is
attained.  The joint laws of \((Y_k,\tau_k(Y_k))\) are tight.  Passing to
a further subsequence, let
\[
  (Y_k,\tau_k(Y_k))\Rightarrow(Y,M).
\]
The cone of symmetric positive-semidefinite matrices is closed, so \(M\)
is symmetric and positive-semidefinite, and Portmanteau gives
\[
  \E\|M\|_{\HS}^2
  \leq\liminf_k\E\|\tau_k(Y_k)\|_{\HS}^2.
\]
For \(\Phi\in C_c^\infty(\R^n;\R^n)\), truncate the matrix coordinate
and pass to the joint weak limit in the Stein identity.  The uniform
\(L^2\) bound supplies uniform integrability, and hence
\begin{equation}
  \E\langle Y,\Phi(Y)\rangle
  =\E\langle M,\nabla\Phi(Y)\rangle_{\HS}.
  \tag{4.9}\label{eq:joint-limit-stein}
\end{equation}
A weak limit of graphs need not remain a graph, so define
\begin{equation}
  \tau(Y)=\E[M\mid Y].
  \tag{4.10}\label{eq:barycentric-kernel}
\end{equation}
Conditional expectation preserves symmetry and positive semidefiniteness,
and \eqref{eq:joint-limit-stein} becomes the Stein identity for \(\tau\).
Uniform integrability of the matrix variables, together with
\eqref{eq:kernel-mean-covariance} and \eqref{eq:w2-from-kernels}, gives
\[
  \E M=\lim_k\E\tau_k(Y_k)=\Cov(Y).
\]
Conditional Jensen and Portmanteau now prove \eqref{eq:stable-basic}.
The same two tools prove part~\textup{(i)}.

For part~\textup{(ii)}, conditional variance contraction and Portmanteau
applied to the centered square give the claim; the centering constants
converge by \eqref{eq:kernel-mean-covariance} and the second-moment
convergence already proved.  Explicitly,
\[
  \Var\!\left(\Tr(A\tau(Y))\right)
  \leq\Var(\Tr(AM))
  \leq\liminf_k\Var\!\left(\Tr(A\tau_k(Y_k))\right).
\]
This proves \eqref{eq:stable-trace}.

For the matrix energy, first suppose that \(B\succeq0\).  The map
\[
  T\longmapsto\Tr(BTBT)
  =\|B^{1/2}TB^{1/2}\|_{\HS}^2
\]
is continuous and convex.  Conditional Jensen and Portmanteau therefore
pass the assumed estimate to \(\tau\).  For arbitrary
\(A\in\mathsf S_n\), write \(A=A_+-A_-\) and
\(|A|=A_++A_-\).  If \(T\succeq0\), then
\begin{align}
  \Tr(|A|T|A|T)-\Tr(ATAT)
  &=4\Tr(A_+TA_-T)\notag\\
  &=4\|A_-^{1/2}TA_+^{1/2}\|_{\HS}^2
  \geq0.
  \tag{4.11}\label{eq:absolute-value-energy}
\end{align}
Applying the positive-semidefinite case with \(B=|A|\) gives
\[
  \E\Tr(A\tau A\tau)
  \leq\E\Tr(|A|\tau|A|\tau)
  \leq C_{\rm en}\Tr(|A|^2)
  =C_{\rm en}\Tr(A^2),
\]
which is part~\textup{(iii)}.

Finally, fix \(g\in C_c^\infty(\R^n)\).  Weak convergence gives
\(\Var_{\mu_k}g\to\Var_\mu g\).  The function
\[
  (y,T)\longmapsto
  \langle T\nabla g(y),\nabla g(y)\rangle
\]
is continuous and bounded in absolute value by
\(\|\nabla g\|_\infty^2\|T\|_{\HS}\).  Uniform integrability of the
matrix variables lets us pass to the joint limit; conditioning on \(Y\)
then replaces \(M\) by \(\tau(Y)\).  This proves
\eqref{eq:stable-weighted-poincare}.
\end{proof}

\begin{remark}[Why the absolute-value step is necessary]
For indefinite \(A\), the map \(T\mapsto\Tr(ATAT)\) is not convex even on
the positive-semidefinite cone.  For example, take
\[
  A=\begin{pmatrix}1&0\\0&-1\end{pmatrix},
  \qquad
  T_\pm=\begin{pmatrix}1&\pm c\\\pm c&1\end{pmatrix},
  \qquad 0<c<1.
\]
Then \(T_\pm\succeq0\), their midpoint is \(I_2\), and
\[
  \Tr(AI_2AI_2)=2
  >2(1-c^2)
  =\frac12\Tr(AT_+AT_+)+\frac12\Tr(AT_-AT_-).
\]
Thus direct conditional Jensen would be invalid.  Positivity of the kernel
and \eqref{eq:absolute-value-energy} preserve the full all-symmetric
estimate.
\end{remark}

\begin{corollary}[Log-concave approximation]
\label{prop:limit}
Assume the estimates in \cref{thm:letwin}.  Every isotropic log-concave
law \(\mu\) on \(\R^n\) admits a symmetric positive-semidefinite Stein
kernel \(\tau\) satisfying, simultaneously for every
\(A\in\mathsf S_n\),
\begin{align}
  \E\tau&=I_n,
  \tag{4.12}\label{eq:limit-kernel-mean}\\
  \E\Tr(A\tau A\tau)
  &\leq2\Tr(A^2),
  \tag{4.13}\label{eq:limit-kernel-energy}\\
  \Var\!\left(\Tr(A\tau)\right)
  &\leq4\Tr(A^2).
  \tag{4.14}\label{eq:limit-kernel-trace}
\end{align}
It also satisfies
\begin{equation}
  \Var_\mu g
  \leq
  \E\langle\tau(Y)\nabla g(Y),\nabla g(Y)\rangle
  \qquad(g\in C_c^\infty(\R^n)).
  \tag{4.15}\label{eq:limit-kernel-weighted-poincare}
\end{equation}
In particular,
\begin{equation}
  \E\Tr(\tau^2)\leq2n,
  \qquad
  \Var(\Tr\tau)\leq4n.
  \tag{4.16}\label{eq:limit-scalar-bounds}
\end{equation}
\end{corollary}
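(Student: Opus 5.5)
The plan is to approximate \(\mu\) by regular isotropic log-concave laws and then pass the estimates of \cref{thm:trace} to the limit with \cref{prop:stein-stability}.

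\textbf{Approximation.} Following \cite[Lemma~A.1]{Letwin}, I would build a sequence \(\mu_k\) of laws of the form~(2.3). One concrete route is to restrict \(\mu\) to large balls and convolve with a small Gaussian. The result is a law with smooth convex potential that is supported (up to an adjustment) on a bounded open convex set. I would then apply an affine renormalization \(y\mapsto\Sigma_k^{-1/2}(y-m_k)\) to make it exactly isotropic. Since \(m_k\to0\) and \(\Sigma_k\to I_n\), we still have \(\mu_k\Rightarrow\mu\). Isotropy and log-concavity survive affine maps, so each \(\mu_k\) is a regular isotropic log-concave target.

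\textbf{Kernel estimates along the sequence.} Let \(\tau_k\) be the moment-map kernel of \(\mu_k\), given by~(2.5). It is symmetric and positive-semidefinite, and \(\E\tau_k=I_n\). Estimate~\eqref{eq:letwin-energy} holds for every \(A\); with \(A=I_n\) it gives \(\E\Tr(\tau_k^2)\le2n\). This verifies the uniform bound~\eqref{eq:uniform-kernel-energy} with \(L=2n\). It also supplies the second-moment hypothesis of \cref{lem:identity}, so \cref{thm:trace} applies with \(C_q=8\) from~\eqref{eq:letwin-qf} and \(C_{\rm en}=2\). That yields \(\Var(\Tr(A\tau_k))\le4\Tr(A^2)\) for every \(A\). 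Finally, (2.6) is the weighted Poincar\'e inequality with \(C_{\rm wp}=1\).

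\textbf{Passage to the limit.} Apply \cref{prop:stein-stability} to the sequence \(\mu_k\). It produces a single positive-semidefinite kernel \(\tau\) for \(\mu\) satisfying all of the following at once:
\begin{itemize}
\item[] \(\E\tau=\Cov(Y)=I_n\), which is \eqref{eq:limit-kernel-mean}, because \(W_2\) convergence preserves the covariance;
\item[] \eqref{eq:limit-kernel-energy}, from part~(iii);
\item[] \eqref{eq:limit-kernel-trace}, from part~(ii);
\item[] \eqref{eq:limit-kernel-weighted-poincare}, from part~(iv).
\end{itemize}
Simultaneity is automatic, because parts (i)--(iv) all refer to the same barycentric kernel \(\tau=\E[M\mid Y]\). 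The bounds~\eqref{eq:limit-scalar-bounds} are the case \(A=I_n\).

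\textbf{Main obstacle.} The delicate step is the regular approximation. Each approximant must satisfy the model exactly: bounded open convex support, \(V\) smooth and convex near \(\overline K\), and exact isotropy. At the same time the approximants must converge weakly to \(\mu\), including when \(\mu\) is supported on a lower-dimensional or unbounded set. Since \(\mu\) is isotropic it has full-dimensional support, so only unboundedness and nonsmoothness need handling. For these I would rely on the cited \cite[Lemma~A.1]{Letwin} rather than reprove it.
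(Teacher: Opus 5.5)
Your proposal is correct and follows the paper's proof step for step. You take regular isotropic approximants from \cite[Lemma~A.1]{Letwin} and use the moment-map kernels \(\tau_k\) with the energy bound \eqref{eq:letwin-energy}, whose case \(A=I_n\) gives both the uniform \(L^2\) hypothesis and the second-moment condition of \cref{lem:identity}. You then obtain the trace bound from \cref{thm:trace} and conclude with \cref{prop:stein-stability} using \(C_{\rm en}=2\), \(C_{\rm tr}=4\), \(C_{\rm wp}=1\). Your concrete approximation sketch needs a re-truncation after the Gaussian convolution, since convolving gives full support. You flag this as ``up to an adjustment'', and like the paper you defer the details to the cited lemma.
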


\begin{proof}
The smoothing, truncation, recentering, and whitening construction of
\cite[Lemma~A.1]{Letwin} provides regular isotropic log-concave laws
\(\mu_k\Rightarrow\mu\).  Let \(\tau_k\) be their moment-map Stein kernels.
Fathi's construction gives the Stein identity and the weighted Poincar\'e
inequality \eqref{eq:weighted-poincare}.  Letwin's matrix-energy estimate
\eqref{eq:letwin-energy} gives
\[
  \E\Tr(A\tau_kA\tau_k)\leq2\Tr(A^2),
\]
and \cref{thm:trace} gives
\[
  \Var\!\left(\Tr(A\tau_k)\right)\leq4\Tr(A^2).
\]
Taking \(A=I_n\) in the energy estimate supplies the uniform \(L^2\)
hypothesis of \cref{prop:stein-stability}.  The conclusions now follow
from that proposition with
\[
  C_{\rm en}=2,
  \qquad
  C_{\rm tr}=4,
  \qquad
  C_{\rm wp}=1.
\]
\end{proof}

\begin{remark}
The kernel furnished by \cref{prop:limit} is a weak-limit kernel.  It need
not be the pointwise Hessian of a nonsmooth moment potential, it need not be
canonical, and it may depend on the chosen subsequence.  Only its Stein
identity and the displayed inequalities are used below.
\end{remark}

\section{Normal approximation for log-concave bilinear forms}
\label{sec:gclt}

We now prove the rectangular statement announced in the introduction.
For a nonzero matrix \(B\), define its spectral participation ratio by
\begin{equation}
  r_4(B)
  :=\frac{\|B\|_{\HS}^4}{\|B\|_{S_4}^4}
  =\frac{\Tr(B^{\mathsf T}B)^2}
         {\Tr((B^{\mathsf T}B)^2)}.
  \tag{5.1}\label{eq:effective-rank}
\end{equation}

\begin{theorem}[Log-concave bilinear forms]
\label{thm:bilinear}
Assume the estimates in \cref{thm:letwin}.  Let
\(X\in\R^m\) and \(Y\in\R^n\) be independent isotropic log-concave random
vectors, let \(0\neq B\in\R^{m\times n}\), and set
\[
  S=X^{\mathsf T}BY,
  \qquad
  \sigma^2=\|B\|_{\HS}^2.
\]
Then \(Z=S/\sigma\) admits a nonnegative scalar Stein kernel
\(\kappa_Z\) such that
\begin{equation}
  \E(\kappa_Z(Z)-1)^2
  \leq\frac{20}{r_4(B)}.
  \tag{5.2}\label{eq:bilinear-stein}
\end{equation}
Consequently, for \(G\sim N(0,1)\),
\begin{equation}
  \boxed{
  W_2^2\!\left(\Law(Z),\Law(G)\right)
  \leq\frac{20}{r_4(B)}.}
  \tag{5.3}\label{eq:bilinear-w2}
\end{equation}
Equivalently, if \(G_\sigma\sim N(0,\sigma^2)\), then
\begin{equation}
  W_2^2\!\left(\Law(S),\Law(G_\sigma)\right)
  \leq
  20\frac{\Tr((B^{\mathsf T}B)^2)}{\Tr(B^{\mathsf T}B)}.
  \tag{5.4}\label{eq:bilinear-w2-unnormalized}
\end{equation}
\end{theorem}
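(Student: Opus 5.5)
We follow the scalarization sketched in the introduction. Apply \cref{prop:limit} to the law of \(Y\) to obtain a positive-semidefinite Stein kernel \(\tau\) with \(\E\tau(Y)=I_n\), the energy bound \eqref{eq:limit-kernel-energy} and the trace-covariance bound \eqref{eq:limit-kernel-trace}. Put \(A=B^{\mathsf T}B\in\mathsf S_n\) and
\[
  Q=X^{\mathsf T}B\tau(Y)B^{\mathsf T}X\geq0.
\]

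\textbf{Step 1: \(\E[Q\mid S]\) is a Stein kernel for \(S\).} For a smooth bounded test \(\phi\) with bounded derivative, condition on \(X\) and apply the vector Stein identity for \(Y\) to the field \(\Phi(y)=B^{\mathsf T}X\,\phi(X^{\mathsf T}By)\). Its Jacobian is \(\phi'(S)\,B^{\mathsf T}XX^{\mathsf T}B\), so
\[
  \E[S\phi(S)]=\E\bigl[\phi'(S)\,X^{\mathsf T}B\tau(Y)B^{\mathsf T}X\bigr]=\E[Q\phi'(S)].
\]
The identity is applied conditionally on \(X\). It is justified by the cutoff argument already used in \cref{lem:stein-fourth}, together with integrability: \(\E Q=\Tr A\) and \(Q\in L^2\) by the bounds below. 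Hence \(\kappa_S(S):=\E[Q\mid S]\geq0\) is a scalar Stein kernel for \(S\), and \(\kappa_Z(z)=\kappa_S(\sigma z)/\sigma^2\) is one for \(Z=S/\sigma\). Note also \(\Var S=\Tr A=\sigma^2\).

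\textbf{Step 2: variance bound.} By conditional Jensen,
\[
  \E(\kappa_S-\sigma^2)^2\leq\Var Q,
\]
since \(\E Q=\Tr(A\,\E\tau)=\Tr A=\sigma^2\). Decompose by conditioning on \(Y\):
\[
  \Var Q=\E\,\Var(Q\mid Y)+\Var\,\E[Q\mid Y].
\]
\begin{itemize}
\item \emph{Conditional mean.} Since \(X\) is isotropic and independent of \(Y\), \(\E[Q\mid Y]=\Tr(B\tau B^{\mathsf T})=\Tr(A\tau)\). By \eqref{eq:limit-kernel-trace}, its variance is at most \(4\Tr(A^2)\).
\item \emph{Conditional fluctuation.} Given \(Y\), \(Q\) is the quadratic form of the isotropic log-concave vector \(X\) with symmetric matrix \(M=B\tau B^{\mathsf T}\). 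Letwin's estimate \eqref{eq:letwin-qf} gives \(\Var(Q\mid Y)\leq8\Tr(M^2)\). Now \(\Tr(M^2)=\Tr(A\tau A\tau)\), so \eqref{eq:limit-kernel-energy} gives \(\E\Tr(M^2)\leq2\Tr(A^2)\). This contribution is therefore at most \(16\Tr(A^2)\).
\end{itemize}
Altogether \(\Var Q\leq20\Tr(A^2)\). Dividing by \(\sigma^4=(\Tr A)^2\) gives
\[
  \E(\kappa_Z(Z)-1)^2\leq\frac{20\Tr(A^2)}{(\Tr A)^2}=\frac{20}{r_4(B)},
\]
which is \eqref{eq:bilinear-stein}.

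\textbf{Step 3: transport bounds.} \Cref{thm:stein-w2} turns \eqref{eq:bilinear-stein} into \eqref{eq:bilinear-w2}. Scaling by \(\sigma\) gives \eqref{eq:bilinear-w2-unnormalized}, because \(\sigma^2\cdot20\Tr(A^2)/(\Tr A)^2=20\Tr(A^2)/\Tr A\).

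\textbf{Main obstacle.} The delicate step is Step 1's rigor. The kernel \(\tau\) is only a weak-limit kernel, defined \(\mu\)-a.e., and we need the conditional Stein identity with a test field that is neither compactly supported nor bounded in \(X\), followed by measurability of \(\E[Q\mid S]\). I would proceed in three moves:
\begin{itemize}
\item First use \(\Phi\chi_L\) in \(y\), letting \(L\to\infty\) by dominated convergence using \(\tau\in L^2\) and \(|B^{\mathsf T}X|\) fixed.
\item Then integrate in \(X\), using Fubini with the moment bounds \(\E|X|^4<\infty\) (log-concavity) and \(\E\|\tau\|_{\HS}^2\leq2n\).
\item Finally extend from smooth compactly supported \(\phi\) to the test class required by \cref{thm:stein-w2}.
\end{itemize}
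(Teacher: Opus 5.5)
Your proposal is correct and follows the paper's proof essentially step for step. It uses the same pre-kernel \(Q=X^{\mathsf T}B\tau(Y)B^{\mathsf T}X\) built from the limiting kernel of \cref{prop:limit}, the conditional vector Stein identity in \(Y\) yielding the nonnegative kernel \(\E[Q\mid S]\), the law-of-total-variance split into \(16\Tr(A^2)\) (quadratic-form estimate plus matrix energy) and \(4\Tr(A^2)\) (trace covariance bound), and then \(L^2\) contraction, \cref{thm:stein-w2}, and rescaling by \(\sigma\). Your cutoff-and-truncation justification also matches the paper's sketch; in fact \(\E|X|^2<\infty\) together with independence and \(Q\geq0\) already suffices for the Fubini step.
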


\begin{proof}
Choose for \(Y\) a positive-semidefinite Stein kernel \(\tau(Y)\) furnished
by \cref{prop:limit}, and put
\begin{equation}
  A=B^{\mathsf T}B,
  \qquad
  Q=X^{\mathsf T}B\tau(Y)B^{\mathsf T}X.
  \tag{5.5}\label{eq:bilinear-prekernel}
\end{equation}
Independence and isotropy give
\begin{equation}
  \E S=0,
  \qquad
  \Var S=\Tr A=\sigma^2,
  \qquad
  \E Q=\Tr A.
  \tag{5.6}\label{eq:bilinear-means}
\end{equation}

Condition on \(X\) and apply the vector Stein identity for \(Y\) to
\[
  \Phi_x(y)=B^{\mathsf T}x\,\phi(x^{\mathsf T}By).
\]
Starting with compactly supported smooth \(\phi\), radial cutoffs in \(y\),
and bounded \(x\), one may remove both restrictions using \(\tau\in L^2\),
truncation of \(X\), and the \(L^2\) estimate for \(Q\) below.  Since
\[
  \nabla_y\Phi_x(y)
  =B^{\mathsf T}xx^{\mathsf T}B\,
    \phi'(x^{\mathsf T}By),
\]
the result is
\begin{equation}
  \E[S\phi(S)]=\E[Q\phi'(S)].
  \tag{5.7}\label{eq:bilinear-stein-identity}
\end{equation}
Therefore
\begin{equation}
  \kappa_S(S)=\E[Q\mid S]
  \tag{5.8}\label{eq:bilinear-scalar-kernel}
\end{equation}
is a scalar Stein kernel for \(S\).  It is nonnegative because
\(\tau\succeq0\).

Conditional on \(Y\), apply \eqref{eq:letwin-qf} to \(X\) with the
symmetric matrix \(B\tau(Y)B^{\mathsf T}\).  By cyclicity of the trace and
\eqref{eq:limit-kernel-energy},
\begin{align}
  \E\Var_X(Q\mid Y)
  &\leq8\E\Tr(A\tau(Y)A\tau(Y))\notag\\
  &\leq16\Tr(A^2).
  \tag{5.9}\label{eq:bilinear-conditional-variance}
\end{align}
Isotropy of \(X\) gives
\[
  \E_X[Q\mid Y]=\Tr(A\tau(Y)),
\]
and \eqref{eq:limit-kernel-trace} gives
\begin{equation}
  \Var\!\left(\E_X[Q\mid Y]\right)
  \leq4\Tr(A^2).
  \tag{5.10}\label{eq:bilinear-trace-variance}
\end{equation}
The law of total variance now yields
\begin{equation}
  \E(Q-\sigma^2)^2\leq20\Tr(A^2).
  \tag{5.11}\label{eq:bilinear-prekernel-variance}
\end{equation}
Since conditional expectation is an \(L^2\) contraction,
\[
  \E(\kappa_S(S)-\sigma^2)^2\leq20\Tr(A^2).
\]
The rescaled kernel \(\kappa_Z(Z)=\kappa_S(S)/\sigma^2\) therefore satisfies
\eqref{eq:bilinear-stein}.  Apply \cref{thm:stein-w2} to obtain
\eqref{eq:bilinear-w2}; scaling \(W_2\) by \(\sigma\) gives
\eqref{eq:bilinear-w2-unnormalized}.
\end{proof}

\begin{corollary}[Jiang--Lee--Vempala conjecture]
\label{thm:gclt}
Assume the estimates in \cref{thm:letwin}.  If \(X,Y\in\R^n\) are
independent isotropic log-concave random vectors and \(G\sim N(0,1)\), then
\begin{equation}
  \boxed{
  W_2^2\!\left(
    \Law\!\left(\frac{\langle X,Y\rangle}{\sqrt n}\right),
    \Law(G)
  \right)
  \leq\frac{20}{n}.}
  \tag{5.12}\label{eq:jlv-normalized}
\end{equation}
Equivalently, if \(G_n\sim N(0,n)\), then
\begin{equation}
  W_2^2\!\left(\Law(\langle X,Y\rangle),\Law(G_n)\right)\leq20.
  \tag{5.13}\label{eq:jlv-unnormalized}
\end{equation}
Thus the dimension-free central limit conjecture of
Jiang, Lee, and Vempala \cite{JLV} follows from the cited estimates.
\end{corollary}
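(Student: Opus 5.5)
This corollary is the square case of \cref{thm:bilinear}, so I will specialize that theorem rather than redo any analysis. I take \(m=n\) and \(B=I_n\). Then \(S=X^{\mathsf T}I_nY=\langle X,Y\rangle\). The normalization is \(\sigma^2=\|I_n\|_{\HS}^2=\Tr(I_n)=n\), so \(Z=S/\sigma=\langle X,Y\rangle/\sqrt n\). The hypotheses of \cref{thm:bilinear} hold: \(X,Y\) are independent isotropic log-concave vectors in \(\R^n\), \(B\neq0\), and the estimates of \cref{thm:letwin} are assumed.

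The only computation is the spectral participation ratio~\eqref{eq:effective-rank}. Here \(B^{\mathsf T}B=I_n\), so
\[
  r_4(I_n)=\frac{\Tr(I_n)^2}{\Tr(I_n^2)}=\frac{n^2}{n}=n.
\]
Inserting this into~\eqref{eq:bilinear-w2} gives \(W_2^2(\Law(\langle X,Y\rangle/\sqrt n),\Law(G))\leq20/n\), which is~\eqref{eq:jlv-normalized}.

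For the unnormalized form~\eqref{eq:jlv-unnormalized}, I will use~\eqref{eq:bilinear-w2-unnormalized} directly. With \(B=I_n\) its right-hand side is \(20\,\Tr(I_n)/\Tr(I_n)=20\), and \(G_\sigma\sim N(0,n)\) is exactly \(G_n\). Alternatively, the \(W_2\) distance scales linearly under the dilation \(x\mapsto\sqrt n\,x\) applied to both laws, and \(\sqrt n\,G\sim N(0,n)\). So squared \(W_2\) scales by \(n\), and \(n\cdot 20/n=20\). Taking square roots gives~(1.1) with \(C=\sqrt{20}\), which is the Jiang--Lee--Vempala statement.

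There is no genuine obstacle here. The one point to check is that the normalization \(\sigma^2=\|B\|_{\HS}^2\) in \cref{thm:bilinear} matches \(\Var\langle X,Y\rangle=n\), and it does by~\eqref{eq:bilinear-means}. All substantive difficulty sits upstream: in \cref{thm:bilinear}, \cref{prop:limit}, and the cited estimates.
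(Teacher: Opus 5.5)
Your proposal is correct and follows the paper's own proof exactly: specialize \cref{thm:bilinear} to \(m=n\), \(B=I_n\), with \(r_4(I_n)=n\) and \(\sigma^2=n\). Your unnormalized form via~\eqref{eq:bilinear-w2-unnormalized}, or equivalently via \(W_2\) scaling, is likewise fine.
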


\begin{proof}
Take \(m=n\) and \(B=I_n\) in \cref{thm:bilinear}; then
\(r_4(I_n)=n\) and \(\sigma^2=n\).
\end{proof}

\begin{remark}[Input accounting]
The proof uses a positive-semidefinite Stein kernel for one factor, its full
matrix-energy bound \eqref{eq:limit-kernel-energy}, the trace covariance
bound \eqref{eq:limit-kernel-trace}, the quadratic-form estimate for the
other factor, and the scalar Stein-discrepancy-to-\(W_2\) inequality.
The extension from regular targets uses \cite[Lemma~A.1]{Letwin} but adds no
quantitative constant.  The argument does not use KLS, a density formula for
the bilinear form, or a separate central limit theorem.  The sharp
Hilbert--Schmidt estimate for the full third-moment tensor proved by Chen and
Klartag \cite{ChenKlartag2026} is part of the surrounding theory but is not
used here.  Independence is essential in \eqref{eq:bilinear-means},
\eqref{eq:bilinear-conditional-variance}, and the conditional-mean identity;
isotropy fixes the target variance and that conditional mean.
\end{remark}

\begin{remark}[Spectral participation and the strength of the Stein statement]
Since \(1\leq r_4(B)\leq\operatorname{rank}(B)\), the theorem gives a central limit
regime precisely when the singular-value mass is not concentrated in a few
directions.  The trivial independent coupling gives
\(W_2^2(\Law(Z),N(0,1))\leq2\), so the transport conclusion may be sharpened
to \(\min\{2,20/r_4(B)\}\).  Estimate \eqref{eq:bilinear-stein} contains
more information: it supplies an approximate integration-by-parts identity
for every smooth scalar test.  No claim is made here about total variation,
relative entropy, or a multivariate approximation.
\end{remark}

\section{Models and sharpness checks}
\label{sec:checks}

The following computations are not needed for the proof.  They show why the
spectral participation ratio is the natural parameter, compare the universal constant with
exact model calculations, and verify that the order in
\eqref{eq:bilinear-w2} cannot in general be improved.

\subsection{Rank one and the necessity of spectral participation}

Suppose first that \(X\) and \(Y\) are standard Gaussian and that
\(B=ab^{\mathsf T}\) has rank one.  After normalization,
\[
  \frac{X^{\mathsf T}BY}{\|B\|_{\HS}}
  \stackrel{d}{=}GH,
\]
where \(G,H\) are independent standard Gaussians.  This law is not normal:
it has fourth moment \(9\), rather than \(3\).  Ordinary rank growth does
not by itself repair this obstruction.  Indeed, let \(B_k\) have singular
values \(1,k^{-1},\ldots,k^{-1}\), with \(k-1\) copies of \(k^{-1}\).
Then \(\operatorname{rank}(B_k)=k\), but \(r_4(B_k)\to1\), and the
normalized Gaussian bilinear form converges in \(L^2\) to \(GH\).  Thus the
singular values must become diffuse.  For rank-one \(B\), \(r_4(B)=1\), so
\cref{thm:bilinear} correctly gives only a constant-order estimate.  Sums
of such products also explain the relevance of variance-gamma approximation in low-rank regimes
\cite{GauntVarianceGamma}.

\subsection{Gaussian bilinear forms}

Let \(X\in\R^m\) and \(Y\in\R^n\) be independent standard Gaussian
vectors.  If \((s_i)\) are the nonzero singular values of \(B\), orthogonal
invariance gives
\begin{equation}
  \frac{X^{\mathsf T}BY}{\|B\|_{\HS}}
  \stackrel{d}{=}
  \frac{\sum_i s_iG_iH_i}{(\sum_i s_i^2)^{1/2}},
  \tag{6.1}\label{eq:gaussian-svd}
\end{equation}
with independent standard Gaussian families \((G_i)\) and \((H_i)\).
Since the Stein kernel of \(Y\) is \(I_n\), the pre-kernel in
\eqref{eq:bilinear-prekernel} is \(Q=X^{\mathsf T}BB^{\mathsf T}X\), and
\[
  \Var Q=2\Tr((B^{\mathsf T}B)^2).
\]
The same projection argument as in \cref{thm:bilinear} therefore gives the
sharper model-specific estimate
\begin{equation}
  \E(\kappa_Z(Z)-1)^2
  \leq\frac{2}{r_4(B)},
  \qquad
  W_2^2(\Law(Z),N(0,1))\leq\frac{2}{r_4(B)}.
  \tag{6.2}\label{eq:gaussian-bilinear-bound}
\end{equation}
The fourth moment is exact:
\begin{equation}
  \E Z^4=3+\frac{6}{r_4(B)}.
  \tag{6.3}\label{eq:gaussian-fourth}
\end{equation}
This confirms both the effective-rank scale and the fact that the universal
constant \(20\) is not expected to be optimal.

For completeness, the Gaussian model also tests the trace identity.  If
\(Y\sim N(0,I_n)\), then \(\tau(Y)=I_n\), and for every symmetric \(A\),
\[
  \Var(Y^{\mathsf T}AY)=2\Tr(A^2),
  \qquad
  T_A=\Tr(A^2),
  \qquad
  U_A=\Tr(A^2).
\]
Thus the right side of~(3.4) is
\[
  2\Tr(A^2)-3\Tr(A^2)+\Tr(A^2)=0,
\]
which is exactly
\(\Var(\Tr(A\tau))=0\).  This confirms both the negative sign and the
coefficient \(3\) in the trace identity.

\subsection{Rectangular products of centered exponentials}

Let the coordinates of \(X\in\R^m\) and \(Y\in\R^n\) be independent
centered unit-rate exponentials, and take the two vectors independent of one
another.  Thus \(X_i=E_i-1\), \(Y_j=F_j-1\), and
\(\tau(Y)=\operatorname{diag}(F_1,\ldots,F_n)\).  Write \(b_j\) for the
columns and \(r_i\) for the rows of \(B\), and set
\[
  c_{\rm col}=\sum_j\|b_j\|^4,
  \qquad
  c_{\rm row}=\sum_i\|r_i\|^4,
  \qquad
  c_4=\sum_{i,j}b_{ij}^4.
\]
For \(A=B^{\mathsf T}B\), a direct calculation from
\(\E X_i^4=\E Y_j^4=9\) gives
\begin{equation}
  \Var Q
  =2\Tr(A^2)+3c_{\rm col}+6c_{\rm row}+6c_4
  \leq17\Tr(A^2).
  \tag{6.4}\label{eq:exponential-rectangular}
\end{equation}
Indeed,
\[
  \E\Tr(B\tau B^{\mathsf T}B\tau B^{\mathsf T})
  =\Tr(A^2)+c_{\rm col},
\]
the non-Gaussian diagonal correction to the conditional quadratic variance
is \(6(c_{\rm row}+c_4)\), and the variance of the conditional mean is
\(c_{\rm col}\).  Each of \(c_{\rm col},c_{\rm row},c_4\) is at most
\(\Tr(A^2)\).  Equality in the final bound of
\eqref{eq:exponential-rectangular} holds when \(B\) is diagonal, so the
universal estimate \(20\Tr(A^2)\) is close to the exact pre-kernel variance
on this extremal log-concave model.

Take, in particular, \(B=I_r\).  For
\[
  Z_r=\frac1{\sqrt r}\sum_{i=1}^r X_iY_i,
\]
independence gives the exact moment identities
\begin{equation}
  \E Z_r^3=\frac4{\sqrt r},
  \qquad
  \E Z_r^4=3+\frac{78}{r}.
  \tag{6.5}\label{eq:exponential-moments}
\end{equation}
For any coupling of \(Z_r\) with \(G\sim N(0,1)\), factor
\(Z_r^3-G^3=(Z_r-G)(Z_r^2+Z_rG+G^2)\) and apply Cauchy--Schwarz.
By \eqref{eq:exponential-moments},
\[
  \E(Z_r^2+Z_rG+G^2)^2
  \leq3\!\left(\E Z_r^4
    +\sqrt{\E Z_r^4\E G^4}+\E G^4\right)
  \leq C
\]
for a universal \(C\), independently of the coupling.  Consequently there is a universal
\(c>0\) such that
\begin{equation}
  W_2^2(\Law(Z_r),N(0,1))\geq\frac{c}{r}.
  \tag{6.6}\label{eq:exponential-lower-bound}
\end{equation}
Thus the order \(1/r_4(B)\) in \cref{thm:bilinear} is sharp in general,
even though its numerical constant is not.

\section{Conclusion and qualifications}
\label{sec:limitations}

Under the quantitative estimates stated in \cref{thm:letwin}, this paper
proves the Jiang--Lee--Vempala dimension-free central limit conjecture and a
rectangular extension.  For \(X^{\mathsf T}BY\), the normalized squared
Stein discrepancy and squared \(2\)-Wasserstein error are at most
\(20/r_4(B)\); the choice \(B=I_n\) gives the conjectured dimension-free
bound for \(\langle X,Y\rangle\).  The argument derives the required trace
covariance bound from an exact moment-map identity and a nonnegative deficit
decomposition, then preserves the covariance, matrix-energy, and weighted
Poincar\'e estimates through a general stability theorem for positive Stein
kernels.

The conclusion does not prove KLS.  The reverse implication in
\cite[Theorem~7]{JLV} is an exponent-level statement with fixed
\(\varepsilon\in(0,1/2)\) and an arbitrary loss \(\delta>0\).  For any
\(\eta>0\), the present \(O(1)\) bound is also \(O(n^\eta)\), so that result
with \(\varepsilon=1/2-\eta\) yields only a subpolynomial conclusion for the
KLS constant: for every \(\alpha>0\), it is \(O_\alpha(n^\alpha)\), not a
dimension-free Poincar\'e bound.  Letwin's explicit polylogarithmic estimate is already
stronger than this consequence; recovering KLS would require an endpoint
conversion controlling general nonlinear observables.

The Gaussian and product-exponential models show that
\(1/r_4(B)\) is the correct general order, while also exhibiting slack in
the numerical constant \(20\).  No optimality claim is made for that
constant.  For nonsmooth targets, \cref{prop:limit} constructs a limiting
positive-semidefinite Stein kernel with the required bounds; it does not
assert a canonical kernel or a classical Hessian formula.  The exact mixed
identity itself remains a statement about the regular moment-map Hessian.

\section*{AI disclosure}

GPT 5.6 Sol assisted with literature triage, algebraic cross-checking,
drafting, and production checks, while Claude Opus 5 provided independent
proof verification and writing feedback.
The author independently verified the mathematical arguments and citations
and takes full responsibility for the content of the manuscript.

\bibliographystyle{plainnat}
\bibliography{log-concave-bilinear-forms}

\end{document}